\newtheoremstyle{mystyle}
	{\baselineskip}
	{\baselineskip}
	{\itshape}
	{}
	{\bfseries}
	{.}
	{1em}
	{}
\theoremstyle{mystyle}
	\newtheorem{thm}{Theorem}[section]
	\newtheorem{lem}[thm]{Lemma}
	\newtheorem{prop}[thm]{Proposition}
\newtheoremstyle{mystyle2}
	{\baselineskip}
	{\baselineskip}
	{\normalfont}
	{}
	{\bfseries}
	{.}
	{1em}
	{}
\theoremstyle{mystyle2}
	\newtheorem{rem}[thm]{Remark}
\newcommand{\C}{\mathbb{C}}
\newcommand{\R}{\mathbb{R}}
\newcommand{\Z}{\mathbb{Z}}
\newcommand{\FF}{\mathcal{F}}
\renewcommand{\SS}{\mathcal{S}}
\newcommand{\abs}[1]{\left\lvert #1 \right\rvert}
\newcommand{\norm}[1]{\left\lVert #1 \right\rVert}
\DeclareMathOperator{\re}{Re}
\DeclareMathOperator{\im}{Im}
\numberwithin{equation}{section}
\begin{document}
\title{
	Remarks on the commutation relations between the Gauss--Weierstrass semigroup and monomial weights
}

\author{
	Yi C. Huang$^{1}$, Ryunosuke Kusaba$^{*2}$, and Tohru Ozawa$^{3}$ \\
\bigskip \\
	$^{1}$School of Mathematical Sciences, \\
	Nanjing Normal University, Nanjing 210023, China
\bigskip \\
	$^{2}$Department of Pure and Applied Physics, \\
	Graduate School of Advanced Science and Engineering, \\
	Waseda University, 3-4-1 Okubo, Shinjuku-ku, Tokyo 169-8555, Japan
\bigskip \\
	$^{3}$Department of Applied Physics, \\
	Waseda University, 3-4-1 Okubo, Shinjuku-ku, Tokyo 169-8555, Japan
}

\date{}

\footnotetext{E-mail: \texttt{Yi.Huang.Analysis@gmail.com} (Yi C. Huang), \texttt{ryu2411501@akane.waseda.jp} (Ryunosuke Kusaba), \\ \texttt{txozawa@waseda.jp} (Tohru Ozawa)}
\footnotetext{$^{*}$Corresponding author: Ryunosuke Kusaba}

\maketitle
\begin{abstract}
	We consider the commutation relations between the Gauss--Weierstrass semigroup on $\R^{n}$, generated by convolution with the complex Gaussian kernel, and monomial weights.
	We provide an explicit representation and a sharper estimate of the commutation relations with more concise proofs than those in previous works.
\end{abstract}

\bigskip\noindent
\textbf{Keywords}: Gauss--Weierstrass semigroup, complex Ginzburg--Landau semigroup, monomial weights, commutation relations, commutator estimates.

\bigskip\noindent
\textbf{2020 Mathematics Subject Classification}: Primary: 47B47; Secondary: 35K08, 35K58, 47A55.


\newpage
\section{Introduction}

The Gauss--Weierstrass semigroup on $\R^{n}$ is defined by
\begin{align*}
	e^{\omega \Delta} \varphi \coloneqq G_{\omega} \ast \varphi
\end{align*}
for $\omega \in \C \setminus \left\{ 0 \right\}$ with $\re \omega \geq 0$ and $\varphi \in L^{q} \left( \R^{n} \right)$ with $q \in \left[ 1, + \infty \right]$, where $G_{\omega} \colon \R^{n} \to \C$ is the complex Gaussian kernel given by
\begin{align*}
	G_{\omega} \left( x \right) \coloneqq \left( 4 \pi \omega \right)^{- \frac{n}{2}} \exp \left( - \frac{\abs{x}^{2}}{4 \omega} \right), \qquad x \in \R^{n}
\end{align*}
and $\ast$ is the convolution in $\R^{n}$.
For $q \in \left[ 1, + \infty \right]$, let $L^{q} \left( \R^{n} \right) =L^{q} \left( \R^{n}; \C \right)$ denote the standard Lebesgue space equipped with the norm $\norm{\,\cdot\,}_{q} \coloneqq \norm{\,\cdot\,}_{L^{q} \left( \R^{n} \right)}$.
For each $m \in \Z_{>0}$, we define the weighted $L^{q}$-space by
\begin{align*}
	L^{q}_{m} \left( \R^{n} \right) \coloneqq \left\{ \varphi \in L^{q} \left( \R^{n} \right); {\,} \text{$x^{\alpha} \varphi \in L^{q} \left( \R^{n} \right)$ for any $\alpha \in \Z_{\geq 0}^{n}$ with $\abs{\alpha} \leq m$} \right\},
\end{align*}
where $x^{\alpha} \varphi$ is the function $x \mapsto x^{\alpha} \varphi \left( x \right)$ on $\R^{n}$.
We note that
\begin{align*}
	L^{q}_{m} \left( \R^{n} \right) = \left\{ \varphi \in L^{q} \left( \R^{n} \right); {\,} \abs{x}^{m} \varphi \in L^{q} \left( \R^{n} \right) \right\},
\end{align*}
where $\abs{x}^{m} \varphi$ means the function $x \mapsto \abs{x}^{m} \varphi \left( x \right)$ on $\R^{n}$.

In 1979, Weissler \cite{Weissler} obtained the $L^{p}$-$L^{q}$ hypercontractive estimates of the Gauss--Weierstrass semigroup with precise operator norms and Gaussian extremizers, subject to certain restrictions on $p$ and $q$ (see \cite[Theorem 3]{Weissler}).
For recent developments in harmonic analysis related to the so-called Weissler conjecture, we refer to \cite{Ivanisvili-Nazarov}.
The Gauss--Weierstrass semigroup also appears in the context of partial differential equations as semigroups associated with time evolution equations such as the nonlinear Schr\"{o}dinger equation ($\omega =it$ with $t \in \R$):
\begin{align}
	\label{Schrodinger}
	i \partial_{t} u+ \Delta u= \lambda \abs{u}^{p-1} u, \qquad \left( t, x \right) \in \R \times \R^{n},
\end{align}
where $\lambda \in \R$, and the complex Ginzburg--Landau equation ($\omega =t \nu$ with $t>0$ and $\re \nu >0$):
\begin{align}
	\label{CGL}
	\partial_{t} u- \nu \Delta u= \mu u+ \lambda \abs{u}^{p-1} u, \qquad \left( t, x \right) \in \left( 0, + \infty \right) \times \R^{n},
\end{align}
where $\mu \in \R$ and $\lambda \in \C$.
We remark that the complex Ginzburg--Landau equation includes the nonlinear heat equation ($\omega =t>0$) as a special case:
\begin{align}
	\label{Heat}
	\partial_{t} u- \Delta u= \lambda \abs{u}^{p-1} u, \qquad \left( t, x \right) \in \left( 0, + \infty \right) \times \R^{n},
\end{align}
where $\lambda \in \R$.
These equations have a broad background in both physics and mathematics.
We refer to \cite{Sulem-Sulem, Cazenave} for the nonlinear Schr\"{o}dinger equation, \cite{Aranson-Kramer, Duong-Nouaili-Zaag, Kusaba-Ozawa} for the complex Ginzburg--Landau equation, \cite{Giga-Giga-Saal, Quittner-Souplet} for the nonlinear heat equation, and the references therein.

In this paper, we focus on the commutation relations between the Gauss--Weierstrass semigroup and monomial weights:
\begin{align*}
	\left[ x^{\alpha}, e^{\omega \Delta} \right] \varphi \coloneqq x^{\alpha} e^{\omega \Delta} \varphi -e^{\omega \Delta} \left( x^{\alpha} \varphi \right)
\end{align*}
for $\alpha \in \Z_{\geq 0}^{n}$.
From the perspective of operator theory, the above commutation relation measures the difference between the semigroup $e^{\omega \Delta}$ and its perturbation $\left( 1/x^{\alpha} \right) e^{\omega \Delta} x^{\alpha}$, or more conveniently, its ``dominant'' inhomogeneous version $\left( 1+ \lvert x \rvert^{2} \right)^{- \abs{\alpha} /2} e^{\omega \Delta} \left( 1+ \lvert x \rvert^{2} \right)^{\abs{\alpha} /2}$.
This perturbation approach is widely used in establishing various smoothing estimates of the heat semigroup and has a huge impact on problems in harmonic analysis, such as the Calder\'{o}n--Zygmund theory for non-integral operators.
See \cite{Davies, Blunck-Kunstmann} for example.
From the viewpoint of partial differential equations, the commutation relations are useful for investigating properties of solutions.
If $\alpha =e_{j}$, where $\left( e_{j}; j=1, \ldots, n \right)$ is the standard basis of $\R^{n}$, then a straightforward computation yields
\begin{align*}
	\left[ x_{j}, e^{\omega \Delta} \right] \varphi =-2 \omega \partial_{j} e^{\omega \Delta} \varphi.
\end{align*}
From the above identity, we can expect that the commutation relations provide the connection between the decay of the initial data and the semigroup at the far field and the regularity of the semigroup.
In the case where $\omega =it$ with $t \in \R$, Jensen \cite{Jensen} employed the commutation relations to obtain precise information on the smoothing effect of the Schr\"{o}dinger evolution group in the weighted Sobolev spaces.
See also \cite{Ozawa1990, Ozawa1991} and the references therein.
As for the case where $\omega =t \nu$ with $t>0$ and $\re \nu >0$, in \cite{Kusaba-Ozawa}, the second and third authors of this paper derived an explicit representation of the commutation relations in terms of a linear combination of the semigroup and its derivatives.
Furthermore, they established a commutator estimate that refines the standard weighted estimate:
\begin{align*}
	\sum_{\abs{\alpha} =m} \norm{x^{\alpha} e^{t \nu \Delta} \varphi}_{q} \leq C \left( \norm{\abs{x}^{m} \varphi}_{q} +t^{\frac{m}{2}} \norm{\varphi}_{q} \right).
\end{align*}
As an application, they presented a new approach to deriving a weighted estimate of global solutions to \eqref{CGL} with $\mu =0$ in the super Fujita-critical case: $p>1+2/n$ (see Appendix \ref{app:CGL_weight}).
The weighted estimate of the global solutions plays an important role in investigating their large-time asymptotic behavior.
For details on this topic, we refer to \cite{Ishige-Kawakami2013, Ishige-Kawakami-Kobayashi, Ishige-Kawakami2024, Kusaba-Ozawa} for example.
In this paper, we concentrate on the case where $\re \omega >0$ and refine the results in \cite{Kusaba-Ozawa} on the representation of the commutation relations and their estimates.

\begin{thm} \label{th:commutator}
	Let $\omega \in \C$ satisfy $\re \omega >0$.
	Let $q \in \left[ 1, + \infty \right]$, $m \in \Z_{>0}$, and $\alpha \in \Z_{\geq 0}^{n}$ with $\abs{\alpha} =m$.
	Then, $x^{\alpha} e^{\omega \Delta} \varphi \in L^{q} \left( \R^{n} \right)$ holds for any $\varphi \in L_{m}^{q} \left( \R^{n} \right)$ with the identity
	\begin{align*}
		\left[ x^{\alpha}, e^{\omega \Delta} \right] \varphi =R_{\alpha} \left( \omega \right) \varphi
	\end{align*}
	in $L^{q} \left( \R^{n} \right)$, where
	\begin{align*}
		R_{\alpha} \left( \omega \right) \varphi \coloneqq \sum_{\substack{\beta + \gamma = \alpha \\ \beta \neq 0}} \sum_{2 \kappa \leq \beta} \frac{\alpha !}{\gamma ! \kappa ! \left( \beta -2 \kappa \right) !} \omega^{\abs{\kappa}} \left( -2 \omega \partial \right)^{\beta -2 \kappa} e^{\omega \Delta} \left( x^{\gamma} \varphi \right).
	\end{align*}
\end{thm}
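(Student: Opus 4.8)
The plan is to reduce everything to a single pointwise identity for the Gaussian kernel together with the binomial expansion of $x^{\alpha}$ inside the convolution. First I would record the basic analytic fact that, since $\re \omega >0$, one has $\abs{G_{\omega}(x)} = \abs{4\pi\omega}^{-n/2}\exp\bigl(-(\re\omega/(4\abs{\omega}^{2}))\abs{x}^{2}\bigr)$, so that $G_{\omega}\in\SS(\R^{n})$; consequently $z^{\beta}G_{\omega}$ and every derivative $\partial^{\delta}G_{\omega}$ belong to $L^{1}(\R^{n})$. Hence, for $\varphi\in L^{q}_{m}(\R^{n})$ and any $\gamma\leq\alpha$ (so that $\abs{\gamma}\leq m$ and $x^{\gamma}\varphi\in L^{q}(\R^{n})$), Young's inequality guarantees that each convolution $(z^{\beta}G_{\omega})\ast(x^{\gamma}\varphi)$ and $(\partial^{\delta}G_{\omega})\ast(x^{\gamma}\varphi)$ lies in $L^{q}(\R^{n})$ for every $q\in[1,+\infty]$, and the standard convolution--differentiation rule gives $(\partial^{\delta}G_{\omega})\ast(x^{\gamma}\varphi)=\partial^{\delta}e^{\omega\Delta}(x^{\gamma}\varphi)$. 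This already makes the right-hand side $R_{\alpha}(\omega)\varphi$ a well-defined element of $L^{q}(\R^{n})$.

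The combinatorial heart of the argument is the Hermite-type identity
\begin{align*}
	z^{\beta}G_{\omega}(z) = \sum_{2\kappa\leq\beta}\frac{\beta!}{\kappa!(\beta-2\kappa)!}\,\omega^{\abs{\kappa}}\,(-2\omega\partial)^{\beta-2\kappa}G_{\omega}(z).
\end{align*}
Since $G_{\omega}$ factorizes as a product of one-dimensional Gaussians and both the monomial $z^{\beta}$ and the operator $(-2\omega\partial)^{\beta-2\kappa}$ split across coordinates, it suffices to prove the one-variable version. I would prove that by induction on the order $k$, using the base case $z\,g=(-2\omega\partial)g$ (equivalently $\partial G_{\omega}=-(z/(2\omega))G_{\omega}$) and the operator commutation $[-2\omega\partial,\,z]=-2\omega$, which yields $z(-2\omega\partial)^{j}=(-2\omega\partial)^{j}z+2\omega j(-2\omega\partial)^{j-1}$. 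Multiplying the order-$k$ identity by $z$ and reindexing then produces the order-$(k+1)$ identity, the coefficient recurrence being exactly the Pascal-type relation $\frac{(k+1)!}{\kappa!(k+1-2\kappa)!}=\frac{k!}{\kappa!(k-2\kappa)!}+2(k-2\kappa+2)\frac{k!}{(\kappa-1)!(k-2\kappa+2)!}$, which is a short algebraic check.

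With these two ingredients in hand, I would assemble the statement as follows. Writing $x^{\alpha}=((x-y)+y)^{\alpha}=\sum_{\beta+\gamma=\alpha}\binom{\alpha}{\beta}(x-y)^{\beta}y^{\gamma}$ inside the convolution integral and using the $L^{1}$--$L^{q}$ absolute convergence from the first paragraph to interchange the finite sum with the integral, one obtains, for a.e.\ $x$,
\begin{align*}
	x^{\alpha}e^{\omega\Delta}\varphi(x)=\sum_{\beta+\gamma=\alpha}\binom{\alpha}{\beta}\bigl((z^{\beta}G_{\omega})\ast(x^{\gamma}\varphi)\bigr)(x).
\end{align*}
The $\beta=0$ term is precisely $e^{\omega\Delta}(x^{\alpha}\varphi)$, so in particular $x^{\alpha}e^{\omega\Delta}\varphi\in L^{q}(\R^{n})$ and the remaining sum over $\beta\neq0$ equals the commutator. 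Substituting the Hermite-type identity for $z^{\beta}G_{\omega}$, pulling the derivatives out of the convolution via $(\partial^{\delta}G_{\omega})\ast(x^{\gamma}\varphi)=\partial^{\delta}e^{\omega\Delta}(x^{\gamma}\varphi)$, and collapsing the coefficients through $\binom{\alpha}{\beta}\frac{\beta!}{\kappa!(\beta-2\kappa)!}=\frac{\alpha!}{\gamma!\kappa!(\beta-2\kappa)!}$ delivers exactly $R_{\alpha}(\omega)\varphi$. The only genuine subtlety, and thus the step I would treat most carefully, is the combinatorial identity of the second paragraph; the analytic manipulations (interchanging the finite sum and the integral, differentiating under the convolution) are all routine consequences of $G_{\omega}\in\SS(\R^{n})$ and Young's inequality, uniformly in $q\in[1,+\infty]$.
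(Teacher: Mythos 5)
Your proposal is correct, and at the level of overall architecture it coincides with the paper's first proof: Young's inequality for well-definedness, the binomial expansion of $x^{\alpha} = \left( \left( x-y \right) +y \right)^{\alpha}$ inside the convolution, identification of the $\beta =0$ term with $e^{\omega \Delta} \left( x^{\alpha} \varphi \right)$, a Hermite-type rewriting of each kernel $\left( x-y \right)^{\beta} G_{\omega} \left( x-y \right)$ as a combination of derivatives of $G_{\omega}$, and the same coefficient collapse $\binom{\alpha}{\beta} \frac{\beta !}{\kappa ! \left( \beta -2 \kappa \right) !} = \frac{\alpha !}{\gamma ! \kappa ! \left( \beta -2 \kappa \right) !}$. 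The one genuine difference is how the key identity is established. The paper sets up the multi-variable Hermite polynomials, records $\partial^{\beta} G_{\omega} = \left( -2 \right)^{- \abs{\beta}} \bm{h}_{\omega, \beta} G_{\omega}$ in \eqref{eq:Gauss_derivative}, and proves the inverse expansion of monomials (Lemma \ref{lem:Hermite_monomial}) by a multi-index induction in Appendix \ref{app:lem_Hermite}, which needs the recurrence of Lemma \ref{lem:Hermite} and a case analysis ($\alpha_{j} =0$, $\alpha_{j}$ even, $\alpha_{j}$ odd). You instead prove the equivalent kernel identity
\begin{align*}
	z^{\beta} G_{\omega} \left( z \right) = \sum_{2 \kappa \leq \beta} \frac{\beta !}{\kappa ! \left( \beta -2 \kappa \right) !} \omega^{\abs{\kappa}} \left( -2 \omega \partial \right)^{\beta -2 \kappa} G_{\omega} \left( z \right)
\end{align*}
directly: reduce to one dimension by factorizing the Gaussian (the coefficients, the powers of $\omega$, and the sum over $\kappa$ are all multiplicative across coordinates), then induct on the degree using $zg= \left( -2 \omega \partial \right) g$ and the operator relation $z \left( -2 \omega \partial \right)^{j} = \left( -2 \omega \partial \right)^{j} z+2 \omega j \left( -2 \omega \partial \right)^{j-1}$. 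Your Pascal-type coefficient recurrence is exactly what the reindexing demands (after clearing factorials it reduces to $\left( k+1-2 \kappa \right) +2 \kappa =k+1$), and the commutator term carries the factor $k-2 \kappa$, so it vanishes automatically at the boundary case $2 \kappa =k$ and no negative powers of the operator appear; since $G_{\omega}$ never vanishes, dividing by it shows your identity is literally equivalent to Lemma \ref{lem:Hermite_monomial} combined with \eqref{eq:Gauss_derivative}. What each route buys: your one-dimensional commutator induction is shorter and more self-contained than Appendix \ref{app:lem_Hermite}; the paper's Hermite formalism costs more upfront but is reused, essentially verbatim, in the second (Fourier-transform) proof of Theorem \ref{th:commutator}, where the polynomials $\bm{H}_{1/ \omega, \beta}$ in the frequency variable play the same role.
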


In \cite{Kusaba-Ozawa}, it was shown that $R_{\alpha} \left( \omega \right) \varphi$ is given by
\begin{align*}
	R_{\alpha} \left( \omega \right) \varphi = \sum_{\substack{\beta + \gamma = \alpha \\ \beta \neq 0}} \frac{\alpha !}{\beta ! \gamma !} \left( -2 \omega \partial \right)^{\beta} e^{\omega \Delta} \left( x^{\gamma} \varphi \right) + \sum_{\substack{\beta + \gamma \leq \alpha, {\ } \abs{\beta + \gamma} \leq \abs{\alpha} -2 \\ \abs{\beta} +1 \leq \ell \leq \frac{\abs{\alpha} + \abs{\beta} - \abs{\gamma}}{2}}} C_{\ell \beta \gamma}^{\alpha} \omega^{\ell} \partial^{\beta} e^{\omega \Delta} \left( x^{\gamma} \varphi \right)
\end{align*}
with some implicit constants $C_{\ell \beta \gamma}^{\alpha} \in \R$ independent of $\omega$.
Theorem \ref{th:commutator} implies that the second sum in the above representation can be unified into the first sum with explicit constants.

We prove Theorem \ref{th:commutator} by using the binomial expansion.
In addition, we provide an alternative proof for the case where $q \neq + \infty$, employing the Fourier transform and a density argument.
In both proofs, the multi-variable Hermite polynomials with the complex parameter $\omega$ play a crucial role.
We remark that the Hermite polynomials are also useful for studying the smoothing effect of the Schr\"{o}dinger evolution group (cf. \cite{Ozawa1990}).

\begin{thm} \label{th:commutator_esti}
	Let $\omega \in \C$ satisfy $\re \omega >0$.
	Let $1 \leq q \leq p \leq + \infty$ and let $m \in \Z_{>0}$.
	Then, the estimate
	\begin{align*}
		\sum_{\abs{\alpha} =m} \norm{\left[ x^{\alpha}, e^{\omega \Delta} \right] \varphi}_{p} &\leq A_{m, r} \left( \theta \right) \abs{\omega}^{- \frac{n}{2} \left( \frac{1}{q} - \frac{1}{p} \right)} \left( \abs{\omega}^{\frac{1}{2}} \norm{\lvert x \rvert^{m-1} {\,} \varphi}_{q} + \abs{\omega}^{\frac{m}{2}} \norm{\varphi}_{q} \right)
	\end{align*}
	holds for any $\varphi \in L_{m}^{q} \left( \R^{n} \right)$, where
	\begin{align*}
		A_{m, r} \left( \theta \right) \coloneqq \frac{\left( n+m-1 \right) !}{\left( n-1 \right) ! m!} \left( 4 \pi \right)^{- \frac{n}{2} \left( 1- \frac{1}{r} \right)} \left( \frac{2}{r \cos \theta} \right)^{\frac{n}{2r}} \left\{ \left[ \left( \frac{4m}{e \cos \theta} \right)^{\frac{1}{2}} +1 \right]^{m} -1 \right\},
	\end{align*}
	$\theta \in \left( - \pi /2, \pi /2 \right)$ with $e^{i \theta} = \omega / \abs{\omega}$, and $r \in \left[ 1, + \infty \right]$ with $1/p+1=1/r+1/q$.
\end{thm}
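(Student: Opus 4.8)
The plan is to feed the explicit representation of Theorem~\ref{th:commutator} into Young's convolution inequality, thereby reducing the whole estimate to a single sharp $L^{r}$-bound for the derivatives of the complex Gaussian kernel, which is exactly where the Hermite polynomials enter. First I invoke Theorem~\ref{th:commutator} to write $[x^{\alpha},e^{\omega\Delta}]\varphi=R_{\alpha}(\omega)\varphi$ as a finite sum over $\beta+\gamma=\alpha$ with $\beta\neq0$ and $2\kappa\leq\beta$, each summand being a scalar multiple of $\omega^{\abs{\kappa}}(-2\omega)^{\abs{\delta}}\partial^{\delta}e^{\omega\Delta}(x^{\gamma}\varphi)$ with $\delta\coloneqq\beta-2\kappa$. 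Since $e^{\omega\Delta}$ is convolution with $G_{\omega}$ and $\partial^{\delta}e^{\omega\Delta}(x^{\gamma}\varphi)=(\partial^{\delta}G_{\omega})\ast(x^{\gamma}\varphi)$, and since $\re\omega>0$ makes $\partial^{\delta}G_{\omega}\in L^{r}(\R^{n})$, Young's inequality with $1/p+1=1/r+1/q$ bounds each summand by a constant multiple of $\abs{\omega}^{\abs{\kappa}}\,(2\abs{\omega})^{\abs{\delta}}\,\norm{\partial^{\delta}G_{\omega}}_{r}\,\norm{x^{\gamma}\varphi}_{q}$; here $x^{\gamma}\varphi\in L^{q}(\R^{n})$ because $\abs{\gamma}\leq m$ and $\varphi\in L^{q}_{m}(\R^{n})$.

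The core step is the $L^{r}$-estimate of the Gaussian derivatives. Differentiating the kernel produces Hermite polynomials, $\partial^{\delta}G_{\omega}(x)=(4\pi\omega)^{-n/2}(-1)^{\abs{\delta}}(4\omega)^{-\abs{\delta}/2}\big(\textstyle\prod_{j}H_{\delta_{j}}(x_{j}/\sqrt{4\omega})\big)e^{-\abs{x}^{2}/(4\omega)}$, and the rescaling $x=\sqrt{4\abs{\omega}}\,y$ turns the Hermite argument into $y_{j}e^{-i\theta/2}$ and the weight into $e^{-r\abs{y}^{2}\cos\theta}$. This isolates the power $\abs{\omega}^{-\frac{n}{2}(1-1/r)-\abs{\delta}/2}$ and, via the $n$-dimensional Gaussian integral together with the sharp Hermite bound, the common constant $(4\pi)^{-\frac{n}{2}(1-1/r)}\left(2/(r\cos\theta)\right)^{n/(2r)}$. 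The main obstacle is precisely the remaining one-dimensional integrals $\int_{\R}\abs{H_{k}(ye^{-i\theta/2})}^{r}e^{-ry^{2}\cos\theta}\,dy$: here I would combine a contour/integral representation of $H_{k}$ with a Stirling-type estimate of $\sqrt{k!}$ (which is where the constant $e$ is born) and the crude bound $k\leq\abs{\beta}\leq m$ (which is where the $4m$ is born, the $4$ coming from the $4\omega$ scaling), so that, after summing the inner $\kappa$-series, the whole block attached to a fixed $\beta$ is controlled by $\left(4m/(e\cos\theta)\right)^{\abs{\beta}/2}/\beta!$ times the common constant.

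I then dispose of the intermediate weights. The bookkeeping of the $\abs{\omega}$-powers is exact: the factor $2^{\abs{\delta}}$ from $(-2\omega)^{\abs{\delta}}$ cancels the $2^{-\abs{\delta}}$ from $(4\omega)^{-\abs{\delta}/2}$, and using $\abs{\delta}/2+\abs{\kappa}=\abs{\beta}/2$ together with $1-1/r=1/q-1/p$ one finds $\abs{\omega}^{\abs{\kappa}}(2\abs{\omega})^{\abs{\delta}}\cdot 2^{-\abs{\delta}}\abs{\omega}^{-\frac{n}{2}(1-1/r)-\abs{\delta}/2}=\abs{\omega}^{-\frac{n}{2}(1/q-1/p)}\abs{\omega}^{\abs{\beta}/2}$. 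Since $\abs{\gamma}=m-\abs{\beta}$ with $1\leq\abs{\beta}\leq m$, I bound $\norm{x^{\gamma}\varphi}_{q}\leq\norm{\abs{x}^{m-\abs{\beta}}\varphi}_{q}$ and interpolate between the two endpoint weights by the log-convexity (H\"{o}lder) inequality, $\norm{\abs{x}^{m-\abs{\beta}}\varphi}_{q}\leq\norm{\abs{x}^{m-1}\varphi}_{q}^{a}\norm{\varphi}_{q}^{b}$ with $a=(m-\abs{\beta})/(m-1)$ and $b=(\abs{\beta}-1)/(m-1)$; a weighted arithmetic--geometric inequality then collapses $\abs{\omega}^{\abs{\beta}/2}\norm{\abs{x}^{m-\abs{\beta}}\varphi}_{q}$ into $\abs{\omega}^{1/2}\norm{\abs{x}^{m-1}\varphi}_{q}+\abs{\omega}^{m/2}\norm{\varphi}_{q}$, uniformly in $\beta$.

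Finally I sum the combinatorial coefficients. The point is that the inner $\kappa$-sum is exactly the multinomial/Hermite-coefficient pattern that reassembles a single Hermite polynomial of order $\beta$ in the complex parameter $\omega$; after the estimate of the previous paragraph, the data attached to a fixed pair $(\beta,\gamma)$ is dominated by $\frac{\alpha!}{\beta!\gamma!}\left(4m/(e\cos\theta)\right)^{\abs{\beta}/2}$ times the common constant and the two endpoint norms. Writing $B\coloneqq(4m/(e\cos\theta))^{1/2}$ and using the multinomial theorem coordinatewise, $\sum_{\beta+\gamma=\alpha}\frac{\alpha!}{\beta!\gamma!}B^{\abs{\beta}}=(1+B)^{m}$, so that removing the excluded term $\beta=0$ (whose value is $1$) produces the bracket $(1+B)^{m}-1$; summing over the $\binom{n+m-1}{m}=\frac{(n+m-1)!}{(n-1)!\,m!}$ multi-indices $\alpha$ with $\abs{\alpha}=m$ supplies the remaining prefactor. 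Collecting the common constant, the power $\abs{\omega}^{-\frac{n}{2}(1/q-1/p)}$, and the two endpoint norms recovers exactly $A_{m,r}(\theta)\,\abs{\omega}^{-\frac{n}{2}(1/q-1/p)}\big(\abs{\omega}^{1/2}\norm{\abs{x}^{m-1}\varphi}_{q}+\abs{\omega}^{m/2}\norm{\varphi}_{q}\big)$, which is the claimed estimate.
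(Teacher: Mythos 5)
Your global architecture --- Young's inequality with $1/p+1=1/r+1/q$, the $\abs{\omega}$-scaling, the H\"{o}lder interpolation $\norm{\abs{x}^{m-\abs{\beta}}\varphi}_{q}\leq\norm{\abs{x}^{m-1}\varphi}_{q}^{a}\norm{\varphi}_{q}^{b}$ followed by the arithmetic--geometric mean inequality, and the multinomial resummation $\sum_{\beta+\gamma=\alpha}\frac{\alpha!}{\beta!\gamma!}B^{\abs{\beta}}=(1+B)^{m}$ together with the count $\frac{(n+m-1)!}{(n-1)!\,m!}$ --- coincides with the paper's. But you diverge at the one step where all the analytic content lives, and there your argument has a genuine gap. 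The paper does \emph{not} feed the derivative (Hermite) representation of Theorem \ref{th:commutator} into Young's inequality; it uses the intermediate convolution form \eqref{eq:R_conv}, $R_{\alpha}(\omega)\varphi=\sum_{\beta+\gamma=\alpha,\,\beta\neq 0}\frac{\alpha!}{\beta!\gamma!}\,(x^{\beta}G_{\omega})\ast(x^{\gamma}\varphi)$, available from the first proof of Theorem \ref{th:commutator} \emph{before} any Hermite expansion is performed. The only kernel bound then needed is the elementary calculus estimate $\lVert x^{\beta}G_{e^{i\theta}}\rVert_{r}\leq 2^{n/2}\norm{G_{2e^{i\theta}}}_{r}\sup_{x}\bigl[\abs{x}^{\abs{\beta}}e^{-\cos\theta\abs{x}^{2}/8}\bigr]$, whose supremum is computed exactly and produces $\bigl(4\abs{\beta}/(e\cos\theta)\bigr)^{\abs{\beta}/2}$. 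You instead expand $x^{\beta}G_{\omega}$ into the $\kappa$-sum of derivatives and apply the triangle inequality term by term. This destroys precisely the sign cancellations (the alternating signs in $\bm{h}_{\omega,\delta}$) that make the reassembly into $x^{\beta}$ possible --- once norms are taken termwise, the ``$\kappa$-sum reassembles a Hermite polynomial'' heuristic is unavailable --- and it forces you to prove sharp bounds on $\int_{\R}\abs{H_{k}(ye^{-i\theta/2})}^{r}e^{-ry^{2}\cos\theta}\,dy$, uniformly in $k\leq m$, $r\in[1,+\infty]$, and $\theta$. That step, the crux of your proof, is only described (``contour representation plus Stirling''), never carried out, and nothing in the sketch shows the resummed $\kappa$-series lands under $\bigl(4m/(e\cos\theta)\bigr)^{\abs{\beta}/2}$ times the common constant.

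There is concrete reason to doubt that it does with the crude bounds you invoke. Take $n=1$, $m=2$, $r=1$ (so $p=q$), $\theta=0$, and the block $\alpha=\beta=2$, $\gamma=0$. Your target for this block is $\frac{\alpha!}{\beta!\gamma!}\bigl(\frac{4m}{e}\bigr)^{\abs{\beta}/2}\cdot\sqrt{2}=\frac{8\sqrt{2}}{e}\approx 4.16$, while your route must bound $4\norm{\partial^{2}G_{1}}_{1}+2\norm{G_{1}}_{1}$. A generic Schwarz/Stirling-type Hermite estimate gives only $\norm{\partial^{2}G_{1}}_{1}\leq 1/\sqrt{2}$, so the block comes out as $2\sqrt{2}+2\approx 4.83>4.16$, and the constant $A_{2,1}(0)$ is \emph{not} recovered; only the exact evaluation $\norm{\partial^{2}G_{1}}_{1}=\sqrt{2/(\pi e)}\approx 0.48$ squeezes the block to $\approx 3.94$, barely under the target. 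So to obtain the stated $A_{m,r}(\theta)$ along your route you would need essentially exact evaluations of complex-argument Hermite $L^{r}$-norms for every order, every $r$, and every $\theta$ --- far beyond ``a Stirling-type estimate of $\sqrt{k!}$ and the crude bound $k\leq m$.'' The detour is also circular: the derivative representation you start from was itself produced from the monomial form via Lemma \ref{lem:Hermite_monomial}, so you are undoing and then trying to redo, lossily, an expansion the paper never needs to estimate through. The fix is simple: do not expand; estimate $\lVert x^{\beta}G_{\omega}\rVert_{r}$ directly, i.e.\ run the argument on \eqref{eq:R_conv}, which is exactly what the paper does.
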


In \cite{Kusaba-Ozawa}, it was proved that the estimate
\begin{align*}
	\sum_{\abs{\alpha} =m} \norm{\left[ x^{\alpha}, e^{\omega \Delta} \right] \varphi}_{1} &\leq C \left( \abs{\omega}^{\frac{1}{2}} \norm{\lvert x \rvert^{m-1} {\,} \varphi}_{1} + \left( \abs{\omega}^{\frac{1}{2}} + \abs{\omega}^{\frac{m}{2}} \right) \norm{\varphi}_{1} \right)
\end{align*}
holds for any $\varphi \in L_{m}^{1} \left( \R^{n} \right)$, where $C>0$ is a constant independent of $\abs{\omega}$.
Theorem \ref{th:commutator_esti} enables us to eliminate the term $\abs{\omega}^{\frac{1}{2}} \norm{\varphi}_{1}$ in the above estimate, which is reasonable from the viewpoint of scaling.
Furthermore, we explicitly determine how the operator norm bound $C$ depends on the parameter $\omega$.
This parameter dependence is important in the scenarios of considering the vanishing limits as $\re \omega \to 0$ or $\im \omega \to 0$.
We emphasize that $A_{m, r} \left( \theta \right)$ is independent of $\sin \theta$ and in particular, that $A_{m, r} \left( \theta \right) \to + \infty$ as $\abs{\theta} \to \pi /2$.
This fact naturally leads us to employ the weighted Sobolev spaces in the case where $\re \omega =0$, namely, the case where $\abs{\theta} = \pi /2$ as in \cite{Jensen, Ozawa1990, Ozawa1991}.

\begin{rem}
	An estimate similar to that in Theorem \ref{th:commutator_esti} is true if the monomial weights $x^{\alpha}$ with $\abs{\alpha} =m$ are replaced by the positive and monotonic weight $\abs{x}^{m}$.
	In fact, under the same assumption as in Theorem \ref{th:commutator_esti}, the estimate
	\begin{align*}
		\norm{\left[ \abs{x}^{m}, e^{\omega \Delta} \right] \varphi}_{p} &\leq \widetilde{A}_{m, r} \left( \theta \right) \abs{\omega}^{- \frac{n}{2} \left( \frac{1}{q} - \frac{1}{p} \right)} \left( \abs{\omega}^{\frac{1}{2}} \norm{\lvert x \rvert^{m-1} {\,} \varphi}_{q} + \abs{\omega}^{\frac{m}{2}} \norm{\varphi}_{q} \right)
	\end{align*}
	holds for any $\varphi \in L_{m}^{q} \left( \R^{n} \right)$, where
	\begin{align*}
		\widetilde{A}_{m, r} \left( \theta \right) \coloneqq \left( 4 \pi \right)^{- \frac{n}{2} \left( 1- \frac{1}{r} \right)} \left( \frac{2}{r \cos \theta} \right)^{\frac{n}{2r}} \left\{ \left[ \left( \frac{4m}{e \cos \theta} \right)^{\frac{1}{2}} +1 \right]^{m} -1 \right\},
	\end{align*}
	$\theta \in \left( - \pi /2, \pi /2 \right)$ with $e^{i \theta} = \omega / \abs{\omega}$, and $r \in \left[ 1, + \infty \right]$ with $1/p+1=1/r+1/q$.
	However, it seems impossible to represent $\left[ \abs{x}^{m}, e^{\omega \Delta} \right] \varphi$ as a linear combination of the Gauss--Weierstrass semigroup and its derivatives.
\end{rem}

The rest of this paper is organized as follows.
In the next section, we recall the fundamental properties of the multi-variable Hermite polynomials with the complex parameter $\omega$.
In Sections \ref{sec:commutator} and \ref{sec:commutator_esti}, we give the proofs of Theorems \ref{th:commutator} and \ref{th:commutator_esti}, respectively.

\section{Hermite polynomials} \label{sec:Hermite}

We first recall the multi-index notation.
Let $\Z_{>0}$ be the set of all positive integers and let $\Z_{\geq 0} \coloneqq \Z_{>0} \cup \left\{ 0 \right\}$.
For $\alpha = \left( \alpha_{1}, \ldots, \alpha_{n} \right) \in \Z_{\geq 0}^{n}$, $\beta = \left( \beta_{1}, \ldots, \beta_{n} \right) \in \Z_{\geq 0}^{n}$, and $\lambda \in \Z_{\geq 0}$, we define $\alpha \pm \beta$ and $\lambda \alpha$ as follows:
\begin{align*}
	\alpha \pm \beta &\coloneqq \left( \alpha_{1} \pm \beta_{1}, \ldots, \alpha_{n} \pm \beta_{n} \right), \\
	\lambda \alpha &\coloneqq \left( \lambda \alpha_{1}, \ldots, \lambda \alpha_{n} \right).
\end{align*}
In addition, $\beta \leq \alpha$ means that $\beta_{j} \leq \alpha_{j}$ holds for any $j \in \left\{ 1, \ldots, n \right\}$.
We remark that $\alpha + \beta$, $\lambda \alpha \in \Z_{\geq 0}^{n}$ for all $\alpha, \beta \in \Z_{\geq 0}^{n}$ and $\lambda \in \Z_{\geq 0}$, while $\alpha - \beta \in \Z_{\geq 0}^{n}$ if and only if $\beta \leq \alpha$.
For $\alpha = \left( \alpha_{1}, \ldots, \alpha_{n} \right) \in \Z_{\geq 0}^{n}$ and $x= \left( x_{1}, \ldots, x_{n} \right) \in \R^{n}$, we set
\begin{align*}
	\abs{\alpha} \coloneqq \sum_{j=1}^{n} \alpha_{j}, \qquad \alpha ! \coloneqq \prod_{j=1}^{n} \alpha_{j} !, \qquad x^{\alpha} \coloneqq \prod_{j=1}^{n} x_{j}^{\alpha_{j}}, \qquad \partial^{\alpha} = \partial_{x}^{\alpha} \coloneqq \prod_{j=1}^{n} \partial_{j}^{\alpha_{j}}, \qquad \partial_{j} \coloneqq \frac{\partial}{\partial x_{j}}.
\end{align*}
For $j \in \left\{ 1, \ldots, n \right\}$, let $e_{j} \in \Z_{\geq 0}^{n}$ denote a multi-index with $\abs{e_{j}} =1$ whose components are $0$ except for the $j$th coordinate.

Let $\omega \in \C \setminus \left\{ 0 \right\}$ satisfy $\re \omega \geq 0$.
To rewrite the derivatives of the complex Gaussian kernel $G_{\omega}$ explicitly, we next introduce the multi-variable Hermite polynomials with the complex parameter $\omega$.
As for the basic properties of the Hermite polynomials, we refer to \cite[Section 1.4]{Thangavelu} and the references therein.
For each $\alpha \in \Z_{\geq 0}^{n}$, we define the Hermite polynomial of order $\alpha$ by
\begin{align*}
	\bm{H}_{\omega, \alpha} \left( x \right) \coloneqq \left( -1 \right)^{\abs{\alpha}} \exp \left( \frac{\abs{x}^{2}}{\omega} \right) \partial^{\alpha} \exp \left( - \frac{\abs{x}^{2}}{\omega} \right), \qquad x \in \R^{n}.
\end{align*}
By a simple computation, we obtain
\begin{align*}
	\bm{H}_{\omega, \alpha} \left( x \right) = \sum_{2 \beta \leq \alpha} \frac{\left( -1 \right)^{\abs{\beta}} \alpha !}{\beta ! \left( \alpha -2 \beta \right) !} \omega^{- \abs{\alpha - \beta}} \left( 2x \right)^{\alpha -2 \beta}.
\end{align*}
In addition, we see that
\begin{align*}
	\left( \partial^{\alpha} G_{\omega} \right) \left( x \right)
	= \left( -2 \right)^{- \abs{\alpha}} \bm{H}_{\omega, \alpha} \left( \frac{x}{2} \right) G_{\omega} \left( x \right).
\end{align*}
Hence, by setting
\begin{align*}
	\bm{h}_{\omega, \alpha} \left( x \right) \coloneqq \bm{H}_{\omega, \alpha} \left( \frac{x}{2} \right) = \sum_{2 \beta \leq \alpha} \frac{\left( -1 \right)^{\abs{\beta}} \alpha !}{\beta ! \left( \alpha -2 \beta \right) !} \omega^{- \abs{\alpha - \beta}} x^{\alpha -2 \beta}, \qquad x \in \R^{n},
\end{align*}
we can rewrite $\partial^{\alpha} G_{\omega}$ as
\begin{align}
	\label{eq:Gauss_derivative}
	\left( \partial^{\alpha} G_{\omega} \right) \left( x \right) = \left( -2 \right)^{- \abs{\alpha}} \bm{h}_{\omega, \alpha} \left( x \right) G_{\omega} \left( x \right).
\end{align}

The following lemma plays an important role in the proof of Theorem \ref{th:commutator}.

\begin{lem} \label{lem:Hermite_monomial}
	Let $\alpha \in \Z_{\geq 0}^{n}$.
	Then, the identity
	\begin{align*}
		x^{\alpha} = \sum_{2 \beta \leq \alpha} \frac{\alpha !}{\beta ! \left( \alpha -2 \beta \right) !} \omega^{\abs{\alpha - \beta}} \bm{h}_{\omega, \alpha -2 \beta} \left( x \right)
	\end{align*}
	holds for any $x \in \R^{n}$.
\end{lem}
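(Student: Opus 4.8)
The plan is to prove the identity by directly substituting the explicit polynomial expression for $\bm{h}_{\omega, \alpha -2 \beta}$ recorded above the lemma and then collapsing the resulting double sum by a multi-index binomial cancellation. Since both sides are polynomials in $x$, it suffices to match the coefficient of each monomial. First I would insert, for every $\beta$ with $2 \beta \leq \alpha$, the expansion $\bm{h}_{\omega, \alpha -2\beta} \left( x \right) = \sum_{2\gamma \leq \alpha -2\beta} \frac{(-1)^{\abs{\gamma}}(\alpha-2\beta)!}{\gamma!\,(\alpha-2\beta-2\gamma)!}\,\omega^{-(\abs{\alpha}-2\abs{\beta}-\abs{\gamma})}\,x^{\alpha-2\beta-2\gamma}$ into the right-hand side. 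Two simplifications then occur: the factor $(\alpha-2\beta)!$ cancels against the matching factor in the outer coefficient, and---this is the crucial bookkeeping point---the two powers of $\omega$ combine as $\omega^{\abs{\alpha}-\abs{\beta}}\cdot\omega^{-(\abs{\alpha}-2\abs{\beta}-\abs{\gamma})}=\omega^{\abs{\beta}+\abs{\gamma}}$, so the dependence on $\alpha$ disappears from the exponent.

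Next I would observe that the two constraints $2\beta\leq\alpha$ and $2\gamma\leq\alpha-2\beta$ are together equivalent to $2(\beta+\gamma)\leq\alpha$, so that after collecting terms the right-hand side becomes
\[
\sum_{2(\beta+\gamma)\leq\alpha}\frac{(-1)^{\abs{\gamma}}\,\alpha!}{\beta!\,\gamma!\,(\alpha-2(\beta+\gamma))!}\,\omega^{\abs{\beta}+\abs{\gamma}}\,x^{\alpha-2(\beta+\gamma)}.
\]
I would then reindex by the total multi-index $\delta=\beta+\gamma$, so that the outer sum runs over $2\delta\leq\alpha$ and, for fixed $\delta$, the inner sum runs over $0\leq\beta\leq\delta$ with $\gamma=\delta-\beta$. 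The monomial $x^{\alpha-2\delta}$ and the factor $\omega^{\abs{\delta}}$ come out, leaving the inner sum $\sum_{0\leq\beta\leq\delta}\binom{\delta}{\beta}(-1)^{\abs{\delta-\beta}}$, which factors over the coordinates as $\prod_{j=1}^{n}(1-1)^{\delta_{j}}=\prod_{j=1}^{n}0^{\delta_{j}}$. This equals $1$ when $\delta=0$ and vanishes otherwise, so only the term $\delta=0$ survives and the right-hand side collapses to $x^{\alpha}$, as claimed.

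The only genuine obstacle is the multi-index bookkeeping: one must track the two $\omega$-exponents carefully to see that they telescope to $\abs{\beta}+\abs{\gamma}$, and then recognize the reindexed inner sum as the alternating multinomial expansion of $(1-1)^{\delta_{j}}$ in each coordinate. As an alternative route that sidesteps the coefficient chase, I would first establish the generating-function identity $\sum_{\alpha\in\Z_{\geq0}^{n}}\frac{t^{\alpha}}{\alpha!}\bm{h}_{\omega,\alpha}\left(x\right)=\exp\left(\frac{t\cdot x-\abs{t}^{2}}{\omega}\right)$ for $t\in\R^{n}$ (it factors over the $n$ coordinates, reducing to the scalar case), and then check by a short computation that the generating function $\sum_{\alpha}\frac{t^{\alpha}}{\alpha!}R_{\alpha}$ of the right-hand side $R_{\alpha}$ of the lemma equals $\exp\left(t\cdot x\right)=\sum_{\alpha}\frac{t^{\alpha}}{\alpha!}x^{\alpha}$; comparing coefficients of $t^{\alpha}/\alpha!$ then yields the identity. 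I expect the direct substitution to be the cleaner and more self-contained of the two.
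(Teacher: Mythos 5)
Your proof is correct, but it takes a genuinely different route from the paper's. The paper argues by induction on $\abs{\alpha}$: it first derives the recurrence $x_{j} \bm{h}_{\omega, \alpha} \left( x \right) = \omega \bm{h}_{\omega, \alpha + e_{j}} \left( x \right) + 2 \alpha_{j} \bm{h}_{\omega, \alpha - e_{j}} \left( x \right)$ (its Lemma \ref{lem:Hermite}) directly from the Rodrigues-type definition of $\bm{H}_{\omega, \alpha}$, and then runs a multi-index induction on $x^{\alpha'} = x_{j} x^{\alpha}$ with separate cases for $\alpha_{j} = 0$, $\alpha_{j}$ even, and $\alpha_{j}$ odd. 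You instead expand each $\bm{h}_{\omega, \alpha - 2 \beta}$ by its explicit polynomial formula and collapse the resulting double sum; your bookkeeping checks out: the $\omega$-exponents do telescope to $\abs{\beta} + \abs{\gamma}$, the constraints $2 \beta \leq \alpha$ and $2 \gamma \leq \alpha - 2 \beta$ are jointly equivalent to $2 \left( \beta + \gamma \right) \leq \alpha$, and after reindexing by $\delta = \beta + \gamma$ the inner alternating sum $\sum_{\beta \leq \delta} \frac{\delta !}{\beta ! \left( \delta - \beta \right) !} \left( -1 \right)^{\abs{\delta - \beta}} = \prod_{j=1}^{n} \left( 1 - 1 \right)^{\delta_{j}}$ annihilates every term with $\delta \neq 0$, and the surviving $\delta = 0$ term is exactly $x^{\alpha}$ since the accompanying factor $\frac{\alpha !}{\delta ! \left( \alpha - 2 \delta \right) !} \omega^{\abs{\delta}}$ reduces to $1$ there. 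The trade-off: your computation is shorter and needs no case analysis, but it leans on the explicit expansion of $\bm{h}_{\omega, \alpha}$, which the paper records only ``by a simple computation'' and never proves, whereas the paper's induction is self-contained from the Rodrigues-type definition (and yields the recurrence of Lemma \ref{lem:Hermite} as a by-product). Your generating-function alternative is also sound: $\sum_{\alpha} \frac{t^{\alpha}}{\alpha !} \bm{h}_{\omega, \alpha} \left( x \right) = \exp \left( \left( t \cdot x - \abs{t}^{2} \right) / \omega \right)$ follows by Taylor expansion of $\exp \left( - \abs{x - t}^{2} / \omega \right)$, and the generating function of the lemma's right-hand side factors as $e^{\omega \abs{t}^{2}} e^{t \cdot x - \omega \abs{t}^{2}} = e^{t \cdot x}$, so comparing coefficients of $t^{\alpha} / \alpha !$ gives the identity.
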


The above lemma provides the inverse representation of the Hermite polynomials, namely, the expansion of monomials in terms of the Hermite polynomials.
In the case where $\omega =1$, this representation is relatively well known.
However, even in this case, we could not find a reference that gives a rigorous proof.
For the convenience of the reader, we present the proof of Lemma \ref{lem:Hermite_monomial} in Appendix \ref{app:lem_Hermite}.

\section{Proof of Theorem \ref{th:commutator}}
\label{sec:commutator}

We first give the proof of Theorem \ref{th:commutator} with the aid of the binomial expansion.

\begin{proof}[Proof of Theorem \ref{th:commutator}]
	Let $\varphi \in L^{q}_{m} \left( \R^{n} \right)$.
	We note that $x^{\beta} G_{\omega} \in L^{1} \left( \R^{n} \right)$ for any $\beta \in \Z_{\geq 0}^{n}$ since $\re \omega >0$.
	For $\beta, \gamma \in \Z_{\geq 0}^{n}$ with $\abs{\gamma} \leq m$, it follows from Young's inequality that
	\begin{align*}
		\lVert ( x^{\beta} G_{\omega} ) \ast \left( x^{\gamma} \varphi \right) \rVert_{q} &\leq \lVert x^{\beta} G_{\omega} \rVert_{1} {\,} \lVert x^{\gamma} \varphi \rVert_{q} <+ \infty,
	\end{align*}
	which implies $( x^{\beta} G_{\omega} ) \ast \left( x^{\gamma} \varphi \right) \in L^{q} \left( \R^{n} \right)$.
	By the binomial expansion, we have
	\begin{align*}
		\sum_{\beta + \gamma = \alpha} \frac{\alpha !}{\beta ! \gamma !} \left( ( x^{\beta} G_{\omega} ) \ast \left( x^{\gamma} \varphi \right) \right) \left( x \right) &= \sum_{\beta + \gamma = \alpha} \frac{\alpha !}{\beta ! \gamma !} \int_{\R^{n}} \left( x-y \right)^{\beta} G_{\omega} \left( x-y \right) y^{\gamma} \varphi \left( y \right) dy \\
		&= \int_{\R^{n}} \left( \left( x-y \right) +y \right)^{\alpha} G_{\omega} \left( x-y \right) \varphi \left( y \right) dy \\
		&=x^{\alpha} \int_{\R^{n}} G_{\omega} \left( x-y \right) \varphi \left( y \right) dy \\
		&=x^{\alpha} \left( e^{\omega \Delta} \varphi \right) \left( x \right).
	\end{align*}
	Therefore, we deduce that $x^{\alpha} e^{\omega \Delta} \varphi \in L^{q} \left( \R^{n} \right)$ and
	\begin{align*}
		\left[ x^{\alpha}, e^{\omega \Delta} \right] \varphi = \sum_{\substack{\beta + \gamma = \alpha \\ \beta \neq 0}} \frac{\alpha !}{\beta ! \gamma !} {\,} ( x^{\beta} G_{\omega} ) \ast \left( x^{\gamma} \varphi \right).
	\end{align*}
	Moreover, by virtue of Lemma \ref{lem:Hermite_monomial} and \eqref{eq:Gauss_derivative}, we obtain
	\begin{align*}
		&\sum_{\substack{\beta + \gamma = \alpha \\ \beta \neq 0}} \frac{\alpha !}{\beta ! \gamma !} \left( ( x^{\beta} G_{\omega} ) \ast \left( x^{\gamma} \varphi \right) \right) \left( x \right) \\
		&\hspace{1cm} = \sum_{\substack{\beta + \gamma = \alpha \\ \beta \neq 0}} \frac{\alpha !}{\beta ! \gamma !} \int_{\R^{n}} \left( x-y \right)^{\beta} G_{\omega} \left( x-y \right) y^{\gamma} \varphi \left( y \right) dy \\
		&\hspace{1cm} = \sum_{\substack{\beta + \gamma = \alpha \\ \beta \neq 0}} \sum_{2 \kappa \leq \beta} \frac{\alpha !}{\beta ! \gamma !} \frac{\beta !}{\kappa ! \left( \beta -2 \kappa \right) !} \omega^{\abs{\beta - \kappa}} \int_{\R^{n}} \bm{h}_{\omega, \beta -2 \kappa} \left( x-y \right) G_{\omega} \left( x-y \right) y^{\gamma} \varphi \left( y \right) dy \\
		&\hspace{1cm} = \sum_{\substack{\beta + \gamma = \alpha \\ \beta \neq 0}} \sum_{2 \kappa \leq \beta} \frac{\alpha !}{\gamma ! \kappa ! \left( \beta -2 \kappa \right) !} \left( -2 \right)^{\abs{\beta -2 \kappa}} \omega^{\abs{\beta - \kappa}} \int_{\R^{n}} \left( \partial^{\beta -2 \kappa} G_{\omega} \right) \left( x-y \right) y^{\gamma} \varphi \left( y \right) dy \\
		&\hspace{1cm} = \sum_{\substack{\beta + \gamma = \alpha \\ \beta \neq 0}} \sum_{2 \kappa \leq \beta} \frac{\alpha !}{\gamma ! \kappa ! \left( \beta -2 \kappa \right) !} \omega^{\abs{\kappa}} \left( \left( -2 \omega \partial \right)^{\beta -2 \kappa} e^{\omega \Delta} \left( x^{\gamma} \varphi \right) \right) \left( x \right).
	\end{align*}
	This completes the proof of Theorem \ref{th:commutator}.
\end{proof}

We next prove Theorem \ref{th:commutator} with $q \neq + \infty$ by using the Fourier transform.
Here and hereafter, let $\SS \left( \R^{n} \right)$ denote the Schwartz space, namely,
\begin{align*}
	\SS \left( \R^{n} \right) \coloneqq \left\{ \varphi \in C^{\infty} \left( \R^{n} \right); {\,} \text{$\sup_{x \in \R^{n}} {\,} \lvert x^{\alpha} \partial^{\beta} \varphi \left( x \right) \rvert <+ \infty$ for all $\alpha, \beta \in \Z_{\geq 0}^{n}$} \right\}.
\end{align*}
We define the Fourier transform $\FF$ on $\SS \left( \R^{n} \right)$ by
\begin{align*}
	\left( \FF \varphi \right) \left( \xi \right) \coloneqq \left( 2 \pi \right)^{- \frac{n}{2}} \int_{\R^{n}} e^{-ix \cdot \xi} \varphi \left( x \right) dx, \qquad \xi \in \R^{n}
\end{align*}
for $\varphi \in \SS \left( \R^{n} \right)$, whose inverse is given by
\begin{align*}
	\left( \FF^{-1} \varphi \right) \left( x \right) = \left( 2 \pi \right)^{- \frac{n}{2}} \int_{\R^{n}} e^{ix \cdot \xi} \varphi \left( \xi \right) d \xi, \qquad x \in \R^{n}.
\end{align*}
Then, we see that the identities
\begin{align*}
	\left( i \partial \right)^{\alpha} \FF \varphi = \FF \left( x^{\alpha} \varphi \right), \qquad \xi^{\alpha} \FF \varphi = \FF \left( \left( -i \partial \right)^{\alpha} \varphi \right)
\end{align*}
hold for any $\varphi \in \SS \left( \R^{n} \right)$ and $\alpha \in \Z_{\geq 0}^{n}$.

\begin{proof}[Proof of Theorem \ref{th:commutator} with $q \neq + \infty$]
	Since $\SS \left( \R^{n} \right)$ is dense in $L^{q}_{m} \left( \R^{n} \right)$ if $q \neq + \infty$ and $e^{\omega \Delta}$ is continuous on $L^{q}_{m} \left( \R^{n} \right)$, it suffices to consider the case where $\varphi \in \SS \left( \R^{n} \right)$.
	We start with the fact that $e^{\omega \Delta} \varphi$ can be represented as
	\begin{align*}
		e^{\omega \Delta} \varphi = \FF^{-1} \left( \exp \left( - \omega \abs{\xi}^{2} \right) \FF \varphi \right).
	\end{align*}
	We remark that $e^{\omega \Delta} \varphi \in \SS \left( \R^{n} \right)$.
	By using the above representation, we have
	\begin{align*}
		\FF \left( x^{\alpha} e^{\omega \Delta} \varphi \right)
		&= \left( i \partial \right)^{\alpha} \FF \left( e^{\omega \Delta} \varphi \right) \\
		&= \left( i \partial \right)^{\alpha} \left( \exp \left( - \omega \abs{\xi}^{2} \right) \FF \varphi \right) \\
		&= \sum_{\beta + \gamma = \alpha} \frac{\alpha !}{\beta ! \gamma !} \left( i \partial \right)^{\beta} \exp \left( - \omega \abs{\xi}^{2} \right) \left( i \partial \right)^{\gamma} \FF \varphi.
	\end{align*}
	It follows from the definition of the Hermite polynomials that
	\begin{align*}
		\left( i \partial \right)^{\beta} \exp \left( - \omega \abs{\xi}^{2} \right)
		&= \left( -i \right)^{\abs{\beta}} \left[ \left( -1 \right)^{\abs{\beta}} \exp \left( \omega \abs{\xi}^{2} \right) \partial^{\beta} \exp \left( - \omega \abs{\xi}^{2} \right) \right] \exp \left( - \omega \abs{\xi}^{2} \right) \\
		&= \left( -i \right)^{\abs{\beta}} \bm{H}_{1/ \omega, \beta} \left( \xi \right) \exp \left( - \omega \abs{\xi}^{2} \right).
	\end{align*}
	Combining these identities yields
	\begin{align*}
		\FF \left( x^{\alpha} e^{\omega \Delta} \varphi \right)
		&= \sum_{\beta + \gamma = \alpha} \frac{\alpha !}{\beta ! \gamma !} \left( -i \right)^{\abs{\beta}} \bm{H}_{1/ \omega, \beta} \left( \xi \right) \exp \left( - \omega \abs{\xi}^{2} \right) \left( i \partial \right)^{\gamma} \FF \varphi \\
		&= \sum_{\beta + \gamma = \alpha} \frac{\alpha !}{\beta ! \gamma !} \left( -i \right)^{\abs{\beta}} \bm{H}_{1/ \omega, \beta} \left( \xi \right) \exp \left( - \omega \abs{\xi}^{2} \right) \FF \left( x^{\gamma} \varphi \right) \\
		&= \sum_{\beta + \gamma = \alpha} \frac{\alpha !}{\beta ! \gamma !} \left( -i \right)^{\abs{\beta}} \bm{H}_{1/ \omega, \beta} \left( \xi \right) \FF \left( e^{\omega \Delta} \left( x^{\gamma} \varphi \right) \right) \\
		&= \FF \left( \sum_{\beta + \gamma = \alpha} \frac{\alpha !}{\beta ! \gamma !} \left( -i \right)^{\abs{\beta}} \bm{H}_{1/ \omega, \beta} \left( -i \partial \right) e^{\omega \Delta} \left( x^{\gamma} \varphi \right) \right),
	\end{align*}
	which implies
	\begin{align*}
		x^{\alpha} e^{\omega \Delta} \varphi = \sum_{\beta + \gamma = \alpha} \frac{\alpha !}{\beta ! \gamma !} \left( -i \right)^{\abs{\beta}} \bm{H}_{1/ \omega, \beta} \left( -i \partial \right) e^{\omega \Delta} \left( x^{\gamma} \varphi \right).
	\end{align*}
	We note that
	\begin{align*}
		\left( -i \right)^{\abs{\beta}} \bm{H}_{1/ \omega, \beta} \left( -i \partial \right) &= \left( -i \right)^{\abs{\beta}} \sum_{2 \kappa \leq \beta} \frac{\left( -1 \right)^{\abs{\kappa}} \beta !}{\kappa ! \left( \beta -2 \kappa \right) !} \omega^{\abs{\beta - \kappa}} \left( -2i \partial \right)^{\beta -2 \kappa} \\
		&= \sum_{2 \kappa \leq \beta} \frac{\beta !}{\kappa ! \left( \beta -2 \kappa \right) !} \omega^{\abs{\kappa}} \left( -2 \omega \partial \right)^{\beta -2 \kappa}.
	\end{align*}
	Taking into account the fact that $\bm{H}_{1/ \omega, 0} \equiv 1$, we conclude that
	\begin{align*}
		x^{\alpha} e^{\omega \Delta} \varphi =e^{\omega \Delta} \left( x^{\alpha} \varphi \right) + \sum_{\substack{\beta + \gamma = \alpha \\ \beta \neq 0}} \sum_{2 \kappa \leq \beta} \frac{\alpha !}{\gamma ! \kappa ! \left( \beta -2 \kappa \right) !} \omega^{\abs{\kappa}} \left( -2 \omega \partial \right)^{\beta -2 \kappa} e^{\omega \Delta} \left( x^{\gamma} \varphi \right).
	\end{align*}
	This completes the proof of Theorem \ref{th:commutator}.
\end{proof}

\section{Proof of Theorem \ref{th:commutator_esti}}
\label{sec:commutator_esti}

Let $\varphi \in L_{m}^{q} \left( \R^{n} \right)$.
We note that the complex Gaussian kernel $G_{\omega}$ has the self-similarity described as
\begin{align*}
	G_{\omega} \left( x \right) = \abs{\omega}^{- \frac{n}{2}} G_{e^{i \theta}} \left( \abs{\omega}^{- \frac{1}{2}} x \right), \qquad x \in \R^{n},
\end{align*}
which yields
\begin{align*}
	\lVert x^{\beta} G_{\omega} \rVert_{r} = \abs{\omega}^{- \frac{n}{2} \left( 1- \frac{1}{r} \right) + \frac{\abs{\beta}}{2}} {\,} \lVert x^{\beta} G_{e^{i \theta}} \rVert_{r}
\end{align*}
for all $\beta \in \Z_{\geq 0}^{n}$.
We use the following representation of $R_{\alpha} \left( \omega \right) \varphi$, which is given in the proof of Theorem \ref{th:commutator} with the binomial expansion:
\begin{align}
	\label{eq:R_conv}
	R_{\alpha} \left( \omega \right) \varphi = \sum_{\substack{\beta + \gamma = \alpha \\ \beta \neq 0}} \frac{\alpha !}{\beta ! \gamma !} {\,} ( x^{\beta} G_{\omega} ) \ast \left( x^{\gamma} \varphi \right).
\end{align}
By Young's inequality, we have
\begin{align*}
	\sum_{\abs{\alpha} =m} \norm{R_{\alpha} \left( \omega \right) \varphi}_{p}
	&\leq \sum_{\abs{\alpha} =m} \sum_{\substack{\beta + \gamma = \alpha \\ \beta \neq 0}} \frac{\alpha !}{\beta ! \gamma !} {\,} \lVert ( x^{\beta} G_{\omega} ) \ast \left( x^{\gamma} \varphi \right) \rVert_{p} \\
	&\leq \sum_{\abs{\alpha} =m} \sum_{\substack{\beta + \gamma = \alpha \\ \beta \neq 0}} \frac{\alpha !}{\beta ! \gamma !} {\,} \lVert x^{\beta} G_{\omega} \rVert_{r} {\,} \lVert x^{\gamma} \varphi \rVert_{q} \\
	&= \sum_{\abs{\alpha} =m} \sum_{\substack{\beta + \gamma = \alpha \\ \beta \neq 0}} \frac{\alpha !}{\beta ! \gamma !} \abs{\omega}^{- \frac{n}{2} \left( 1- \frac{1}{r} \right) + \frac{\abs{\beta}}{2}} {\,} \lVert x^{\beta} G_{e^{i \theta}} \rVert_{r} {\,} \lVert x^{\gamma} \varphi \rVert_{q} \\
	&\leq \abs{\omega}^{- \frac{n}{2} \left( \frac{1}{q} - \frac{1}{p} \right)} \sum_{\abs{\alpha} =m} \sum_{\substack{\beta + \gamma = \alpha \\ \beta \neq 0}} \frac{\alpha !}{\beta ! \gamma !} \abs{\omega}^{\frac{\abs{\beta}}{2}} {\,} \lVert \abs{x}^{\abs{\beta}} G_{e^{i \theta}} \rVert_{r} {\,} \lVert \abs{x}^{\abs{\gamma}} \varphi \rVert_{q}.
\end{align*}
In the case where $m=1$, since $\alpha, \beta, \gamma \in \Z_{\geq 0}^{n}$ satisfy $\abs{\alpha} =1$, $\beta \neq 0$, and $\beta + \gamma = \alpha$ if and only if $\left( \beta, \gamma \right) = \left( \alpha, 0 \right)$, we obtain
\begin{align*}
	\sum_{\abs{\alpha} =1} \norm{R_{\alpha} \left( \omega \right) \varphi}_{p} &\leq \abs{\omega}^{- \frac{n}{2} \left( \frac{1}{q} - \frac{1}{p} \right) + \frac{1}{2}} \norm{\varphi}_{q} \sum_{\abs{\alpha} =1} \norm{\abs{x} G_{e^{i \theta}}}_{r}.
\end{align*}
If $m \geq 2$, then for any $\alpha, \beta, \gamma \in \Z_{\geq 0}$ satisfying $\abs{\alpha} =m$, $\abs{\beta} \geq 2$, and $\beta + \gamma = \alpha$, H\"{o}lder's inequality implies
\begin{align*}
	\abs{\omega}^{\frac{\abs{\beta}}{2}} {\,} \lVert \abs{x}^{\abs{\gamma}} \varphi \rVert_{q}
	&\leq \abs{\omega}^{\frac{\abs{\beta}}{2}} \norm{\lvert x \rvert^{m-1} {\,} \varphi}_{q}^{\frac{\abs{\gamma}}{m-1}} \norm{\varphi}_{q}^{\frac{\abs{\beta} -1}{m-1}} \\
	&= \abs{\omega}^{\frac{1}{2}} \norm{\lvert x \rvert^{m-1} {\,} \varphi}_{q}^{\frac{\abs{\gamma}}{m-1}} \left( \abs{\omega}^{\frac{m-1}{2}} \norm{\varphi}_{q} \right)^{\frac{\abs{\beta} -1}{m-1}} \\
	&\leq \abs{\omega}^{\frac{1}{2}} \left( \norm{\lvert x \rvert^{m-1} {\,} \varphi}_{q} + \abs{\omega}^{\frac{m-1}{2}} \norm{\varphi}_{q} \right) \\
	&= \abs{\omega}^{\frac{1}{2}} \norm{\lvert x \rvert^{m-1} {\,} \varphi}_{q} + \abs{\omega}^{\frac{m}{2}} \norm{\varphi}_{q},
\end{align*}
whence follows
\begin{align*}
	\sum_{\abs{\alpha} =m} \norm{R_{\alpha} \left( \omega \right) \varphi}_{p}
	&\leq \abs{\omega}^{- \frac{n}{2} \left( \frac{1}{q} - \frac{1}{p} \right) + \frac{1}{2}} \norm{\lvert x \rvert^{m-1} {\,} \varphi}_{q} \sum_{\abs{\alpha} =m} \sum_{\substack{\beta + \gamma = \alpha \\ \abs{\beta} =1}} \frac{\alpha !}{\beta ! \gamma !} \norm{\abs{x} G_{e^{i \theta}}}_{r} \\
	&\hspace{1cm} + \abs{\omega}^{- \frac{n}{2} \left( \frac{1}{q} - \frac{1}{p} \right)} \left( \abs{\omega}^{\frac{1}{2}} \norm{\lvert x \rvert^{m-1} {\,} \varphi}_{q} + \abs{\omega}^{\frac{m}{2}} \norm{\varphi}_{q} \right) \sum_{\abs{\alpha} =m} \sum_{\substack{\beta + \gamma = \alpha \\ \abs{\beta} \geq 2}} \frac{\alpha !}{\beta ! \gamma !} {\,} \lVert \abs{x}^{\abs{\beta}} G_{e^{i \theta}} \rVert_{r} \\
	&\leq \abs{\omega}^{- \frac{n}{2} \left( \frac{1}{q} - \frac{1}{p} \right)} \left( \abs{\omega}^{\frac{1}{2}} \norm{\lvert x \rvert^{m-1} {\,} \varphi}_{q} + \abs{\omega}^{\frac{m}{2}} \norm{\varphi}_{q} \right) \sum_{\abs{\alpha} =m} \sum_{\substack{\beta + \gamma = \alpha \\ \beta \neq 0}} \frac{\alpha !}{\beta ! \gamma !} {\,} \lVert \abs{x}^{\abs{\beta}} G_{e^{i \theta}} \rVert_{r}.
\end{align*}
Finally, by a simple computation, we get
\begin{align*}
	&\sum_{\abs{\alpha} =m} \sum_{\substack{\beta + \gamma = \alpha \\ \beta \neq 0}} \frac{\alpha !}{\beta ! \gamma !} {\,} \lVert \abs{x}^{\abs{\beta}} G_{e^{i \theta}} \rVert_{r} \\
	&\hspace{1cm} \leq 2^{\frac{n}{2}} \norm{G_{2e^{i \theta}}}_{r} \sum_{\abs{\alpha} =m} \sum_{\substack{\beta + \gamma = \alpha \\ \beta \neq 0}} \frac{\alpha !}{\beta ! \gamma !} \sup_{x \in \R^{n}} \left[ \abs{x}^{\abs{\beta}} \exp \left( - \frac{\cos \theta}{8} \abs{x}^{2} \right) \right] \\
	&\hspace{1cm} =2^{\frac{n}{2}} \left( 8 \pi \right)^{- \frac{n}{2} \left( 1- \frac{1}{r} \right)} \left( \frac{1}{r \cos \theta} \right)^{\frac{n}{2r}} \sum_{\abs{\alpha} =m} \sum_{\substack{\beta + \gamma = \alpha \\ \beta \neq 0}} \frac{\alpha !}{\beta ! \gamma !} \left( \frac{4 \abs{\beta}}{e \cos \theta} \right)^{\frac{\abs{\beta}}{2}} \\
	&\hspace{1cm} \leq \frac{\left( n+m-1 \right) !}{\left( n-1 \right) ! m!} \left( 4 \pi \right)^{- \frac{n}{2} \left( 1- \frac{1}{r} \right)} \left( \frac{2}{r \cos \theta} \right)^{\frac{n}{2r}} \left\{ \left[ \left( \frac{4m}{e \cos \theta} \right)^{\frac{1}{2}} +1 \right]^{m} -1 \right\}.
\end{align*}
This completes the proof of Theorem \ref{th:commutator_esti}. \qed

\appendix
\section{Proof of Lemma \ref{lem:Hermite_monomial}} \label{app:lem_Hermite}

We first prove the following lemma.

\begin{lem} \label{lem:Hermite}
	Let $\omega \in \C$ with $\re \omega \geq 0$, $\alpha = \left( \alpha_{1}, \ldots, \alpha_{n} \right) \in \Z_{\geq 0}^{n}$, and $j \in \left\{ 1, \ldots, n \right\}$.
	Then, the identities
	\begin{align*}
		x_{j} \bm{h}_{\omega, \alpha} \left( x \right) = \begin{cases}
			\omega \bm{h}_{\omega, \alpha +e_{j}} \left( x \right) &\quad \text{if} \quad \alpha_{j} =0, \\
			\omega \bm{h}_{\omega, \alpha +e_{j}} \left( x \right) +2 \alpha_{j} \bm{h}_{\omega, \alpha -e_{j}} \left( x \right) &\quad \text{if} \quad \alpha_{j} \geq 1
		\end{cases}
	\end{align*}
	hold for all $x= \left( x_{1}, \ldots, x_{n} \right) \in \R^{n}$.
\end{lem}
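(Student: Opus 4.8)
The plan is to exploit the tensor–product structure of $\bm{h}_{\omega,\alpha}$ to reduce the multivariate identity to a one–dimensional three–term recurrence, and then to obtain that recurrence from the Rodrigues-type formula defining the Hermite polynomials.

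First I would record that, since $\exp(-\abs{x}^{2}/\omega)=\prod_{i=1}^{n}\exp(-x_{i}^{2}/\omega)$ and $\partial^{\alpha}=\prod_{i=1}^{n}\partial_{i}^{\alpha_{i}}$, the polynomial factorizes as $\bm{h}_{\omega,\alpha}(x)=\prod_{i=1}^{n}h_{\omega,\alpha_{i}}(x_{i})$, where $h_{\omega,k}$ is the one–variable polynomial obtained from the series of Section \ref{sec:Hermite} by specializing $n=1$ (equivalently $h_{\omega,k}(s)=\bm{H}_{\omega,k}(s/2)$). Multiplying $\bm{h}_{\omega,\alpha}$ by $x_{j}$ touches only the $j$th factor, so the asserted identity follows at once from the single–variable relation $s\,h_{\omega,k}(s)=\omega\,h_{\omega,k+1}(s)+2k\,h_{\omega,k-1}(s)$ for $k\geq 1$, together with $s\,h_{\omega,0}(s)=\omega\,h_{\omega,1}(s)$ for $k=0$. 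The case split $\alpha_{j}=0$ versus $\alpha_{j}\geq 1$ in the statement is precisely what is needed so that the undefined symbol $\bm{h}_{\omega,\alpha-e_{j}}$ never appears (its prospective coefficient $2\alpha_{j}$ vanishes anyway).

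To prove the one–dimensional recurrence I would differentiate the Rodrigues identity $\partial_{u}^{k}\exp(-u^{2}/\omega)=(-1)^{k}\bm{H}_{\omega,k}(u)\exp(-u^{2}/\omega)$ once more in $u$, which yields $\bm{H}_{\omega,k+1}(u)=\tfrac{2u}{\omega}\bm{H}_{\omega,k}(u)-\bm{H}_{\omega,k}'(u)$, and then combine it with the lowering relation $\bm{H}_{\omega,k}'(u)=\tfrac{2k}{\omega}\bm{H}_{\omega,k-1}(u)$, which is read off directly from the explicit coefficients of the series. Multiplying by $\omega$ and substituting $u=s/2$, so that $2u=s$ and $\bm{H}_{\omega,k}(s/2)=h_{\omega,k}(s)$, gives exactly the claimed recurrence. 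Alternatively, the same recurrence drops out in one stroke from the generating identity $\sum_{k}\tfrac{t^{k}}{k!}\bm{H}_{\omega,k}(u)=\exp((2ut-t^{2})/\omega)$ (itself immediate from Rodrigues and Taylor's formula) by comparing coefficients of $t^{k}$ in the $t$–derivative of both sides; I may include this as the quicker route.

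I expect the main obstacle to be bookkeeping rather than conceptual: keeping the index shifts $\alpha\mapsto\alpha\pm e_{j}$ consistent with the summation ranges $2\beta\leq\alpha$, and matching the boundary terms of the recurrence (the top coefficient and the $k=0$ endpoint) so that the two branches of the stated formula are reproduced verbatim. Should I instead prefer a fully self–contained argument, I would compare the coefficients of each monomial $x^{\alpha+e_{j}-2\beta}$ on both sides directly; after cancelling the common factor $(-1)^{\abs{\beta}}\omega^{-\abs{\alpha-\beta}}$ and all factorials in the coordinates $i\neq j$, the identity collapses to the elementary arithmetic statement $(a+1)-2b\,\mathbf{1}_{\{b\geq 1\}}=(a+1-2b)\,\mathbf{1}_{\{2b\leq a\}}$ in $a=\alpha_{j}$, $b=\beta_{j}$, valid for $2b\leq a+1$, which I would verify by splitting into the three cases $b=0$, then $1\leq b$ with $2b\leq a$, and finally $2b=a+1$.
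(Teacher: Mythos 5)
Your proposal is correct, and all three routes you sketch do go through, but it is not quite the paper's argument. The paper also works from the Rodrigues formula, yet it stays in $n$ variables and never needs a lowering/derivative relation: it writes $\omega \bm{H}_{\omega, \alpha +e_{j}} \left( x \right) = \left( -1 \right)^{\abs{\alpha}} \exp \left( \abs{x}^{2} / \omega \right) \partial^{\alpha} \left( 2x_{j} \exp \left( - \abs{x}^{2} / \omega \right) \right)$, i.e.\ it places the extra derivative $\partial_{j}$ \emph{innermost} so that it produces the factor $2x_{j}$ inside $\partial^{\alpha}$, and then a single application of the Leibniz rule $\partial^{\alpha} \left( x_{j} f \right) =x_{j} \partial^{\alpha} f+ \alpha_{j} \partial^{\alpha -e_{j}} f$ yields both terms of the recurrence at once. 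Your route places the extra derivative \emph{outermost}, which produces $\bm{H}_{\omega, k}'$ and therefore requires the separate lowering relation $\bm{H}_{\omega, k}' \left( u \right) = \left( 2k/ \omega \right) \bm{H}_{\omega, k-1} \left( u \right)$, verified from the explicit series; combined with the tensorization $\bm{h}_{\omega, \alpha} \left( x \right) = \prod_{i} h_{\omega, \alpha_{i}} \left( x_{i} \right)$ (which is valid, and which the paper does not use) this gives the same three-term recurrence. What the paper's arrangement buys is brevity and self-containedness: only the Leibniz rule, no series manipulation, no reduction to one dimension. What yours buys is the explicit identification of the statement with the classical one-dimensional Hermite three-term recurrence, together with two independent cross-checks: the generating identity $\sum_{k} \left( t^{k} /k! \right) \bm{H}_{\omega, k} \left( u \right) = \exp \left( \left( 2ut-t^{2} \right) / \omega \right)$, and the direct coefficient comparison, where the arithmetic identity you state, $\left( a+1 \right) -2b \, \mathbf{1}_{\left\{ b \geq 1 \right\}} = \left( a+1-2b \right) \mathbf{1}_{\left\{ 2b \leq a \right\}}$ for $2b \leq a+1$, is indeed exactly what the comparison reduces to, and it holds in all three of the cases you list.
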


\begin{proof}
	It suffices to show that the identities
	\begin{align*}
		2x_{j} \bm{H}_{\omega, \alpha} \left( x \right) = \begin{cases}
			\omega \bm{H}_{\omega, \alpha +e_{j}} \left( x \right) &\quad \text{if} \quad \alpha_{j} =0, \\
			\omega \bm{H}_{\omega, \alpha +e_{j}} \left( x \right) +2 \alpha_{j} \bm{H}_{\omega, \alpha -e_{j}} \left( x \right) &\quad \text{if} \quad \alpha_{j} \geq 1
		\end{cases}
	\end{align*}
	hold for all $x= \left( x_{1}, \ldots, x_{n} \right) \in \R^{n}$.
	If $\alpha_{j} =0$, then we have
	\begin{align*}
		\omega \bm{H}_{\omega, \alpha +e_{j}} \left( x \right)
		&= \left( -1 \right)^{\abs{\alpha} +1} \omega \exp \left( \frac{\abs{x}^{2}}{\omega} \right) \partial^{\alpha} \partial_{j} \exp \left( - \frac{\abs{x}^{2}}{\omega} \right) \\
		&= \left( -1 \right)^{\abs{\alpha}} \exp \left( \frac{\abs{x}^{2}}{\omega} \right) \partial^{\alpha} \left( 2x_{j} \exp \left( - \frac{\abs{x}^{2}}{\omega} \right) \right) \\
		&=2x_{j} \bm{H}_{\omega, \alpha} \left( x \right).
	\end{align*}
	In the same way, if $\alpha_{j} \geq 1$, then we obtain
	\begin{align*}
		\omega \bm{H}_{\omega, \alpha +e_{j}} \left( x \right)
		&= \left( -1 \right)^{\abs{\alpha}} \exp \left( \frac{\abs{x}^{2}}{\omega} \right) \partial^{\alpha} \left( 2x_{j} \exp \left( - \frac{\abs{x}^{2}}{\omega} \right) \right) \\
		&= \left( -1 \right)^{\abs{\alpha}} \exp \left( \frac{\abs{x}^{2}}{\omega} \right) \left( 2 \alpha_{j} \partial^{\alpha -e_{j}} \exp \left( - \frac{\abs{x}^{2}}{\omega} \right) +2x_{j} \partial^{\alpha} \exp \left( - \frac{\abs{x}^{2}}{\omega} \right) \right) \\
		&=-2 \alpha_{j} \bm{H}_{\omega, \alpha -e_{j}} \left( x \right) +2x_{j} \bm{H}_{\omega, \alpha} \left( x \right).
	\end{align*}
	This completes the proof of Lemma \ref{lem:Hermite}.
\end{proof}

Now, we are ready to show Lemma \ref{lem:Hermite_monomial}.

\begin{proof}[Proof of Lemma \ref{lem:Hermite_monomial}]
	We prove the assertion by induction on $m= \abs{\alpha}$.
	The case where $m=0$, namely, the case where $\alpha =0$ follows from the fact that $\bm{h}_{\omega, 0} \equiv 1$.
	Let $m \in \Z_{\geq 0}$.
	We assume that Lemma \ref{lem:Hermite_monomial} is true for any $\alpha \in \Z_{\geq 0}^{n}$ with $\abs{\alpha} =m$.
	Let $\alpha' \in \Z_{\geq 0}^{n}$ satisfy $\abs{\alpha'} =m+1$ and let $x= \left( x_{1}, \ldots, x_{n} \right) \in \R^{n}$.
	Then, there exist $\alpha = \left( \alpha_{1}, \ldots, \alpha_{n} \right) \in \Z_{\geq 0}^{n}$ with $\abs{\alpha} =m$ and $j \in \left\{ 1, \ldots, n \right\}$ such that $\alpha' = \alpha +e_{j}$.
	If $\alpha_{j} =0$, then by the induction hypothesis and Lemma \ref{lem:Hermite}, we have
	\begin{align*}
		x^{\alpha'} =x_{j} x^{\alpha}
		&= \sum_{2 \beta \leq \alpha} \frac{\alpha !}{\beta ! \left( \alpha -2 \beta \right) !} \omega^{\abs{\alpha - \beta}} x_{j} \bm{h}_{\omega, \alpha -2 \beta} \left( x \right) \\
		&= \sum_{2 \beta \leq \alpha} \frac{\left( \alpha +e_{j} \right) !}{\beta ! \left( \alpha -2 \beta +e_{j} \right) !} \omega^{\abs{\alpha - \beta} +1} \bm{h}_{\omega, \alpha -2 \beta +e_{j}} \left( x \right) \\
		&= \sum_{2 \beta \leq \alpha'} \frac{\alpha' !}{\beta ! \left( \alpha' -2 \beta \right) !} \omega^{\abs{\alpha' - \beta}} \bm{h}_{\omega, \alpha' -2 \beta} \left( x \right).
	\end{align*}
	We next consider the case where $\alpha_{j} \geq 1$.
	If $\alpha_{j}$ is even, then it follows from the induction hypothesis and Lemma \ref{lem:Hermite} that
	\begin{align*}
		x^{\alpha'} =x_{j} x^{\alpha}
		&= \sum_{\substack{2 \beta \leq \alpha \\ 2 \beta_{j} \leq \alpha_{j} -1}} \frac{\alpha !}{\beta ! \left( \alpha -2 \beta \right) !} \omega^{\abs{\alpha - \beta}} x_{j} \bm{h}_{\omega, \alpha -2 \beta} \left( x \right) + \sum_{\substack{2 \beta \leq \alpha \\ 2 \beta_{j} = \alpha_{j}}} \frac{\alpha !}{\beta ! \left( \alpha -2 \beta \right) !} \omega^{\abs{\alpha - \beta}} x_{j} \bm{h}_{\omega, \alpha -2 \beta} \left( x \right) \\
		&= \sum_{\substack{2 \beta \leq \alpha \\ 2 \beta_{j} \leq \alpha_{j} -1}} \frac{\alpha !}{\beta ! \left( \alpha -2 \beta \right) !} \omega^{\abs{\alpha - \beta}} \left( \omega \bm{h}_{\omega, \alpha -2 \beta +e_{j}} \left( x \right) +2 \left( \alpha_{j} -2 \beta_{j} \right) \bm{h}_{\omega, \alpha -2 \beta -e_{j}} \left( x \right) \right) \\
		&\hspace{2cm} + \sum_{\substack{2 \beta \leq \alpha \\ 2 \beta_{j} = \alpha_{j}}} \frac{\alpha !}{\beta ! \left( \alpha -2 \beta \right) !} \omega^{\abs{\alpha - \beta} +1} \bm{h}_{\omega, \alpha -2 \beta +e_{j}} \left( x \right) \\
		&= \sum_{2 \beta \leq \alpha} \frac{\alpha !}{\beta ! \left( \alpha -2 \beta \right) !} \omega^{\abs{\alpha - \beta +e_{j}}} \bm{h}_{\omega, \alpha -2 \beta +e_{j}} \left( x \right) \\
		&\hspace{2cm} +2 \sum_{\substack{2 \beta \leq \alpha +e_{j} \\ \beta_{j} \geq 1}} \frac{\alpha !}{\left( \beta -e_{j} \right) ! \left( \alpha -2 \beta +e_{j} \right) !} \omega^{\abs{\alpha - \beta +e_{j}}} \bm{h}_{\omega, \alpha -2 \beta +e_{j}} \left( x \right) \\
		&= \sum_{\substack{2 \beta \leq \alpha \\ \beta_{j} = 0}} \frac{\alpha !}{\beta ! \left( \alpha -2 \beta \right) !} \omega^{\abs{\alpha' - \beta}} \bm{h}_{\omega, \alpha' -2 \beta} \left( x \right) \\
		&\hspace{2cm} + \sum_{\substack{2 \beta \leq \alpha +e_{j} \\ \beta_{j} \geq 1}} \frac{\alpha !}{\beta ! \left( \alpha -2 \beta +e_{j} \right) !} \left( \left( \alpha_{j} -2 \beta_{j} +1 \right) +2 \beta_{j} \right) \omega^{\abs{\alpha' - \beta}} \bm{h}_{\omega, \alpha' -2 \beta} \left( x \right) \\
		&= \sum_{\substack{2 \beta \leq \alpha +e_{j} \\ \beta_{j} = 0}} \frac{\left( \alpha +e_{j} \right) !}{\beta ! \left( \alpha -2 \beta +e_{j} \right) !} \omega^{\abs{\alpha' - \beta}} \bm{h}_{\omega, \alpha' -2 \beta} \left( x \right) \\
		&\hspace{2cm} + \sum_{\substack{2 \beta \leq \alpha +e_{j} \\ \beta_{j} \geq 1}} \frac{\left( \alpha +e_{j} \right) !}{\beta ! \left( \alpha -2 \beta +e_{j} \right) !} \omega^{\abs{\alpha' - \beta}} \bm{h}_{\omega, \alpha' -2 \beta} \left( x \right) \\
		&= \sum_{2 \beta \leq \alpha'} \frac{\alpha' !}{\beta ! \left( \alpha' -2 \beta \right) !} \omega^{\abs{\alpha' - \beta}} \bm{h}_{\omega, \alpha' -2 \beta} \left( x \right).
	\end{align*}
	Similarly, if $\alpha_{j}$ is odd, then we obtain
	\begin{align*}
		x^{\alpha'} =x_{j} x^{\alpha}
		&= \sum_{2 \beta \leq \alpha -e_{j}} \frac{\alpha !}{\beta ! \left( \alpha -2 \beta \right) !} \omega^{\abs{\alpha - \beta}} x_{j} \bm{h}_{\omega, \alpha -2 \beta} \left( x \right) \\
		&= \sum_{2 \beta \leq \alpha -e_{j}} \frac{\alpha !}{\beta ! \left( \alpha -2 \beta \right) !} \omega^{\abs{\alpha - \beta}} \left( \omega \bm{h}_{\omega, \alpha -2 \beta +e_{j}} \left( x \right) +2 \left( \alpha_{j} -2 \beta_{j} \right) \bm{h}_{\omega, \alpha -2 \beta -e_{j}} \left( x \right) \right) \\
		&= \sum_{2 \beta \leq \alpha -e_{j}} \frac{\alpha !}{\beta ! \left( \alpha -2 \beta \right) !} \omega^{\abs{\alpha - \beta +e_{j}}} \bm{h}_{\omega, \alpha -2 \beta +e_{j}} \left( x \right) \\
		&\hspace{2cm} +2 \sum_{\substack{2 \beta \leq \alpha +e_{j} \\ \beta_{j} \geq 1}} \frac{\alpha !}{\left( \beta -e_{j} \right) ! \left( \alpha -2 \beta +e_{j} \right) !} \omega^{\abs{\alpha - \beta +e_{j}}} \bm{h}_{\omega, \alpha -2 \beta +e_{j}} \left( x \right) \\
		&= \sum_{\substack{2 \beta \leq \alpha -e_{j} \\ \beta_{j} = 0}} \frac{\alpha !}{\beta ! \left( \alpha -2 \beta \right) !} \omega^{\abs{\alpha' - \beta}} \bm{h}_{\omega, \alpha' -2 \beta} \left( x \right) \\
		&\hspace{2cm} + \sum_{\substack{2 \beta \leq \alpha +e_{j} \\ 2 \beta_{j} = \alpha_{j} +1}} \frac{2 \beta_{j} \cdot \alpha !}{\beta! \left( \alpha -2 \beta +e_{j} \right) !} \omega^{\abs{\alpha' - \beta}} \bm{h}_{\omega, \alpha' -2 \beta} \left( x \right) \\
		&\hspace{2cm} + \sum_{\substack{2 \beta \leq \alpha -e_{j} \\ \beta_{j} \geq 1}} \frac{\alpha !}{\beta ! \left( \alpha -2 \beta +e_{j} \right) !} \left( \left( \alpha_{j} -2 \beta_{j} +1 \right) +2 \beta_{j} \right) \omega^{\abs{\alpha' - \beta}} \bm{h}_{\omega, \alpha' -2 \beta} \left( x \right) \\
		&= \sum_{\substack{2 \beta \leq \alpha +e_{j} \\ \beta_{j} = 0}} \frac{\left( \alpha +e_{j} \right) !}{\beta ! \left( \alpha -2 \beta +e_{j} \right) !} \omega^{\abs{\alpha' - \beta}} \bm{h}_{\omega, \alpha' -2 \beta} \left( x \right) \\
		&\hspace{2cm} + \sum_{\substack{2 \beta \leq \alpha +e_{j} \\ \beta_{j} \geq 1}} \frac{\left( \alpha +e_{j} \right) !}{\beta ! \left( \alpha -2 \beta +e_{j} \right) !} \omega^{\abs{\alpha' - \beta}} \bm{h}_{\omega, \alpha' -2 \beta} \left( x \right) \\
		&= \sum_{2 \beta \leq \alpha'} \frac{\alpha' !}{\beta ! \left( \alpha' -2 \beta \right) !} \omega^{\abs{\alpha' - \beta}} \bm{h}_{\omega, \alpha' -2 \beta} \left( x \right).
	\end{align*}
	This completes the induction argument.
\end{proof}

\section{Application to the complex Ginzburg--Landau equation} \label{app:CGL_weight}

In this section, as an application of Theorems \ref{th:commutator} and \ref{th:commutator_esti}, we show the following proposition on a weighted estimate of global solutions to \eqref{CGL} with $\mu =0$ in the super Fujita-critical case: $p>1+2/n$.

\begin{prop} \label{pro:CGL_weight}
	Let $\nu \in \C$ with $\re \nu >0$, $p>1+2/n$, and $u_{0} \in \left( L^{1} \cap L^{\infty} \right) \left( \R^{n} \right)$.
	Let $q \in \left[ 1, + \infty \right]$ and let $m \in \Z_{>0}$.
	Assume that $u \in C \left( \left( 0, + \infty \right); \left( L^{1} \cap L^{\infty} \right) \left( \R^{n} \right) \right)$ is a global solution to \eqref{CGL} with $\mu =0$ satisfying
	\begin{align}
		\label{CGL_decay}
		\sup_{r \in \left[ 1, + \infty \right]} \sup_{t>0} \left( 1+t \right)^{\frac{n}{2} \left( 1- \frac{1}{r} \right)} \norm{u \left( t \right)}_{r} <+ \infty.
	\end{align}
	If $u_{0} \in L_{m}^{q} \left( \R^{n} \right)$, then $u \in C \left( \left( 0, + \infty \right); L_{m}^{q} \left( \R^{n} \right) \right)$ with the estimate
	\begin{align*}
		\sum_{\abs{\alpha} =m} \norm{x^{\alpha} u \left( t \right)}_{q} \leq C \left( 1+t^{\frac{m}{2}} \right)
	\end{align*}
	for all $t>0$.
\end{prop}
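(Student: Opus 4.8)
The plan is to start from the Duhamel (mild) formulation of \eqref{CGL} with $\mu = 0$,
\[
	u(t) = e^{t \nu \Delta} u_{0} + \lambda \int_{0}^{t} e^{(t-s) \nu \Delta} \left( \abs{u(s)}^{p-1} u(s) \right) ds,
\]
and to bound $M_{m}(t) \coloneqq \norm{\abs{x}^{m} u(t)}_{q}$ (equivalent to $\sum_{\abs{\alpha}=m} \norm{x^{\alpha} u(t)}_{q}$) by induction on $m$. For each $\alpha$ with $\abs{\alpha}=m$ I would apply the commutation identity of Theorem \ref{th:commutator} to every occurrence of $x^{\alpha} e^{\omega \Delta}$, writing $x^{\alpha} e^{\omega \Delta} \psi = e^{\omega \Delta}(x^{\alpha} \psi) + R_{\alpha}(\omega) \psi$ with $\omega = t\nu$ in the linear term and $\omega = (t-s)\nu$ inside the integral. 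Since $\arg \omega = \theta$ is fixed ($\nu$, hence $e^{i\theta} = \nu/\abs{\nu}$, is fixed) and $\norm{G_{\omega}}_{1} = (\cos\theta)^{-n/2}$ is independent of $\abs{\omega}$, the leading terms $e^{\omega \Delta}(x^{\alpha}\psi)$ are controlled in $L^{q}$ by Young's inequality, while the remainders $R_{\alpha}(\omega)\psi$ are controlled by Theorem \ref{th:commutator_esti} with $p = q$ (so $r = 1$), producing the two factors $\abs{\omega}^{1/2} \norm{\abs{x}^{m-1} \psi}_{q}$ and $\abs{\omega}^{m/2} \norm{\psi}_{q}$.

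The linear contribution is then $\le C(1 + t^{m/2})$: the term $e^{t\nu\Delta}(x^{\alpha} u_{0})$ is bounded uniformly in $t$ because $u_{0} \in L^{q}_{m}$, and Theorem \ref{th:commutator_esti} gives $\norm{R_{\alpha}(t\nu) u_{0}}_{q} \le C(\abs{t\nu}^{1/2} \norm{\abs{x}^{m-1} u_{0}}_{q} + \abs{t\nu}^{m/2} \norm{u_{0}}_{q}) \le C(1 + t^{m/2})$. For the nonlinear part I set $F_{s} = \lambda \abs{u(s)}^{p-1} u(s)$, so $\abs{F_{s}} = \abs{\lambda} \abs{u(s)}^{p}$. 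The leading nonlinear term is estimated by Hölder's inequality,
\[
	\norm{\abs{x}^{m} F_{s}}_{q} \le C \norm{\abs{x}^{m} u(s)}_{q} \norm{u(s)}_{\infty}^{p-1} \le C M_{m}(s) (1+s)^{- \frac{n(p-1)}{2}},
\]
where the last bound uses \eqref{CGL_decay}; crucially, this term carries no factor of $(t-s)$, because the semigroup acts with the constant $L^{q}$-bound $(\cos\theta)^{-n/2}$. For the remainder terms I would use $\norm{\abs{x}^{m-1} F_{s}}_{q} \le C M_{m-1}(s) \norm{u(s)}_{\infty}^{p-1}$ and $\norm{F_{s}}_{q} \le C \norm{u(s)}_{pq}^{p}$: by the induction hypothesis $M_{m-1}(s) \le C(1 + s^{(m-1)/2})$ the first is a \emph{known} function of $s$, and the second decays by \eqref{CGL_decay}. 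Inserting the factors $((t-s)\abs{\nu})^{1/2}$ and $((t-s)\abs{\nu})^{m/2}$ from Theorem \ref{th:commutator_esti} and splitting each $s$-integral at $s = t/2$, one checks (using that $p > 1 + 2/n$ makes the relevant decay exponents exceed $1$) that the entire remainder contribution is $\le C(1 + t^{m/2})$, with no dependence on $M_{m}$.

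Collecting these bounds yields the integral inequality
\[
	M_{m}(t) \le C(1 + t^{m/2}) + C \int_{0}^{t} M_{m}(s) (1+s)^{- \frac{n(p-1)}{2}} \, ds ,
\]
where the only self-referential term has the \emph{non-singular} kernel $(1+s)^{-n(p-1)/2}$. Since $p > 1 + 2/n$ forces $\tfrac{n(p-1)}{2} > 1$, this kernel is integrable on $(0,+\infty)$, so Gronwall's inequality with the nondecreasing majorant $C(1+t^{m/2})$ closes the estimate as $M_{m}(t) \le C(1 + t^{m/2})$. The base case $m = 0$ is just $\norm{u(t)}_{q} \le C$ from \eqref{CGL_decay}, and the lower-order weighted norms needed in the induction follow from it by the interpolation bound $\norm{\abs{x}^{k} u}_{q} \le \norm{\abs{x}^{m} u}_{q}^{k/m} \norm{u}_{q}^{1-k/m}$.

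I expect the main obstacle to lie not in this Gronwall step but in justifying that $u(t)$ actually belongs to $L^{q}_{m}$ for every $t$, since Theorems \ref{th:commutator} and \ref{th:commutator_esti} (and the finiteness of $M_{m}(s)$ and of $\norm{\abs{x}^{m} F_{s}}_{q}$) presuppose this weighted regularity. I would establish this persistence, together with the asserted continuity $u \in C((0,+\infty); L^{q}_{m})$, by a continuation/bootstrap argument: run the estimates above with $\abs{x}^{m}$ replaced by the bounded truncated weight $\abs{x}^{m} \wedge R^{m}$ (so that all quantities are \emph{a priori} finite), derive the $R$-uniform bound, and let $R \to +\infty$ by monotone convergence; the resulting a priori bound rules out any blow-up of the weighted norm, while continuity in $t$ follows from the Duhamel formula, the strong continuity of $e^{t\nu\Delta}$, and dominated convergence.
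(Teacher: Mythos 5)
Your overall strategy coincides with the paper's: Duhamel formula, induction on $m$, the commutation identity of Theorem \ref{th:commutator} plus the estimate of Theorem \ref{th:commutator_esti} with $r=1$, a Gr\"{o}nwall inequality whose kernel $\left( 1+s \right)^{- \frac{n}{2} \left( p-1 \right)}$ is integrable precisely because $p>1+2/n$, and a regularization of the weight to make all quantities a priori finite, followed by a limit. The decay computations and the Gr\"{o}nwall step are correct. The gap is in the step you yourself flag as the main obstacle, and your proposed fix does not work as stated. You propose to ``run the estimates above with $\abs{x}^{m}$ replaced by $\abs{x}^{m} \wedge R^{m}$''. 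But those estimates rest on Theorem \ref{th:commutator}, which is an algebraic identity for \emph{monomial} weights $x^{\alpha}$ (binomial theorem / Hermite expansion); no such identity exists for the truncated weight, so $R_{\alpha} \left( \omega \right)$ and Theorem \ref{th:commutator_esti} simply do not apply to $\abs{x}^{m} \wedge R^{m}$. The obvious substitute, a Lipschitz-type commutator bound as in Lemma \ref{lem:commutator_gene}, gives $\norm{\left[ \abs{x}^{m} \wedge R^{m}, e^{\omega \Delta} \right] \varphi}_{q} \leq C \abs{\omega}^{\frac{1}{2}} \norm{\nabla \left( \abs{x}^{m} \wedge R^{m} \right)}_{\infty} \norm{\varphi}_{q}$, and $\norm{\nabla \left( \abs{x}^{m} \wedge R^{m} \right)}_{\infty} =mR^{m-1}$ blows up as $R \to + \infty$, so the resulting bound is not $R$-uniform. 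The gap is fillable: from the kernel representation one can use $\abs{\left( \abs{x}^{m} \wedge R^{m} \right) - \left( \abs{y}^{m} \wedge R^{m} \right)} \leq \abs{\abs{x}^{m} - \abs{y}^{m}} \leq C_{m} \abs{x-y} \left( \abs{x-y}^{m-1} + \abs{y}^{m-1} \right)$, which yields the $R$-uniform bound $C \left( \abs{\omega}^{\frac{1}{2}} \norm{\abs{x}^{m-1} \varphi}_{q} + \abs{\omega}^{\frac{m}{2}} \norm{\varphi}_{q} \right)$ (essentially re-proving, for the truncated weight, the Remark following Theorem \ref{th:commutator_esti}); but as written your argument invokes the theorems outside their scope.

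The paper's regularization is different and avoids this issue. Writing $\alpha' = \alpha +e_{j}$ with $\abs{\alpha} =m$, it regularizes only the single extra factor, $\eta_{j, \varepsilon} \left( x \right) =x_{j} e^{- \varepsilon \abs{x}^{2}}$, which lies in $W^{1, \infty} \left( \R^{n} \right)$ with $\norm{\nabla \eta_{j, \varepsilon}}_{\infty} \leq 2$ \emph{uniformly in} $\varepsilon$, while the degree-$m$ part $x^{\alpha}$ is kept as a genuine monomial so that Theorem \ref{th:commutator} applies to it (the induction hypothesis guarantees $x^{\alpha} u_{0}, {\,} x^{\alpha} f \left( u \left( s \right) \right) \in L^{q} \left( \R^{n} \right)$, whereas membership at order $m+1$ is exactly what is not yet known). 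Two auxiliary lemmas then do the work your truncation was meant to do: Lemma \ref{lem:commutator_gene} controls $\left[ \eta_{j, \varepsilon}, e^{\omega \Delta} \right]$, and Lemma \ref{lem:application} establishes the identity $x_{j} R_{\alpha} \left( \omega \right) \varphi =R_{\alpha +e_{j}} \left( \omega \right) \varphi -R_{e_{j}} \left( \omega \right) \left( x^{\alpha} \varphi \right)$ together with an estimate of $\norm{x_{j} R_{\alpha} \left( \omega \right) \varphi}_{q}$ requiring only $\varphi \in L_{m}^{q} \left( \R^{n} \right)$, not $L_{m+1}^{q} \left( \R^{n} \right)$. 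Gr\"{o}nwall is applied to $\norm{\eta_{j, \varepsilon} x^{\alpha} u \left( t \right)}_{q}$, which is finite a priori, and then $\varepsilon \searrow 0$ with Fatou's lemma concludes. If you replace your truncation paragraph either by this device or by the $R$-uniform kernel estimate sketched above, your proof closes.
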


\begin{rem}
	If $p>1+2/n$ and $\norm{u_{0}}_{1} + \norm{u_{0}}_{\infty}$ is sufficiently small, then a global solution to \eqref{CGL} with $\mu =0$ satisfying \eqref{CGL_decay} can be constructed by the standard contraction argument to the following integral equation associated with \eqref{CGL}:
	\begin{align}
		\label{CGL_integral}
		u \left( t \right) =e^{t \nu \Delta} u_{0} + \int_{0}^{t} e^{\left( t-s \right) \nu \Delta} f \left( u \left( s \right) \right) ds,
	\end{align}
	where $f \left( \xi \right) \coloneqq \lambda \abs{\xi}^{p-1} \xi$.
	We remark that the smallness of $\norm{\abs{x}^{m} u_{0}}_{q}$ is not assumed in Proposition \ref{pro:CGL_weight}, which is one of the advantages to employ the commutation relations.
\end{rem}

In order to prove Proposition \ref{pro:CGL_weight}, we need to prepare three lemmas.

\begin{lem}[{cf. \cite[Lemma 2.1]{Kusaba-Ozawa}}] \label{lem:LpLq}
	Let $\omega \in \C$ satisfy $\re \omega >0$.
	Let $1 \leq q \leq p \leq + \infty$ and let $\alpha \in \Z_{\geq 0}^{n}$.
	Then, the estimate
	\begin{align*}
		\norm{\partial^{\alpha} e^{\omega \Delta} \varphi}_{p} \leq \abs{\omega}^{- \frac{n}{2} \left( \frac{1}{q} - \frac{1}{p} \right) - \frac{\abs{\alpha}}{2}} \norm{\partial^{\alpha} G_{e^{i \theta}}}_{r} \norm{\varphi}_{q}
	\end{align*}
	holds for any $\varphi \in L^{q} \left( \R^{n} \right)$, where $\theta \in \left( - \pi /2, \pi /2 \right)$ with $e^{i \theta} = \omega / \abs{\omega}$ and $r \in \left[ 1, + \infty \right]$ with $1/p+1=1/r+1/q$.
\end{lem}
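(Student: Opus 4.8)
The statement to prove is the $L^p$–$L^q$ estimate
\[
\norm{\partial^{\alpha} e^{\omega \Delta} \varphi}_{p} \leq \abs{\omega}^{- \frac{n}{2} \left( \frac{1}{q} - \frac{1}{p} \right) - \frac{\abs{\alpha}}{2}} \norm{\partial^{\alpha} G_{e^{i \theta}}}_{r} \norm{\varphi}_{q},
\]
and the plan is to reduce it entirely to Young's convolution inequality together with the self-similarity of the Gaussian kernel. The key observation is that differentiation commutes with convolution, so that $\partial^{\alpha} e^{\omega \Delta} \varphi = \partial^{\alpha}(G_{\omega} \ast \varphi) = (\partial^{\alpha} G_{\omega}) \ast \varphi$. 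Placing the derivative on the smooth, rapidly decaying kernel is legitimate because $\re \omega > 0$ guarantees $\partial^{\alpha} G_{\omega} \in L^{r}(\R^n)$ for every $\alpha$ and every $r \in [1,+\infty]$.

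Once the derivative sits on the kernel, Young's inequality gives
\[
\norm{(\partial^{\alpha} G_{\omega}) \ast \varphi}_{p} \leq \norm{\partial^{\alpha} G_{\omega}}_{r} \, \norm{\varphi}_{q},
\]
where the exponents satisfy the Young relation $1/p + 1 = 1/r + 1/q$, which is exactly the hypothesis on $r$. This already produces the correct structure; what remains is to compute the $\omega$-dependence of $\norm{\partial^{\alpha} G_{\omega}}_{r}$ and reduce it to the normalized kernel $G_{e^{i\theta}}$.

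The remaining step is a scaling computation. From the self-similarity $G_{\omega}(x) = \abs{\omega}^{-n/2} G_{e^{i\theta}}(\abs{\omega}^{-1/2} x)$ (used later in the paper), differentiating $\abs{\alpha}$ times pulls out a factor $\abs{\omega}^{-\abs{\alpha}/2}$ from the chain rule, giving $(\partial^{\alpha} G_{\omega})(x) = \abs{\omega}^{-n/2 - \abs{\alpha}/2} (\partial^{\alpha} G_{e^{i\theta}})(\abs{\omega}^{-1/2} x)$. Taking the $L^r$-norm and changing variables $y = \abs{\omega}^{-1/2} x$ yields a Jacobian factor $\abs{\omega}^{n/(2r)}$, so that
\[
\norm{\partial^{\alpha} G_{\omega}}_{r} = \abs{\omega}^{-\frac{n}{2} - \frac{\abs{\alpha}}{2} + \frac{n}{2r}} \norm{\partial^{\alpha} G_{e^{i\theta}}}_{r} = \abs{\omega}^{-\frac{n}{2}\left(1 - \frac{1}{r}\right) - \frac{\abs{\alpha}}{2}} \norm{\partial^{\alpha} G_{e^{i\theta}}}_{r}.
\]
Finally, rewriting the exponent $-\frac{n}{2}(1 - 1/r)$ via the Young relation $1 - 1/r = 1/q - 1/p$ converts it into $-\frac{n}{2}(1/q - 1/p)$, matching the stated bound exactly.

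I do not anticipate a genuine obstacle here: the argument is a clean combination of Young's inequality and a change of variables, and every ingredient ($\partial^{\alpha} G_{\omega} \in L^r$, self-similarity, the exponent bookkeeping) is either elementary or already recorded in the paper. The only point demanding a modicum of care is the exponent arithmetic, ensuring the Jacobian from the change of variables and the chain-rule factor combine to give precisely $-\frac{n}{2}(1/q - 1/p) - \abs{\alpha}/2$; this is routine but is where a sign or a factor of $1/2$ could slip.
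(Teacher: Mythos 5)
Your proof is correct, and it is essentially the paper's approach: the paper does not reprove this lemma (it cites \cite[Lemma 2.1]{Kusaba-Ozawa}), but the two ingredients you use --- moving $\partial^{\alpha}$ onto the kernel and applying Young's inequality with $1/p+1=1/r+1/q$, then exploiting the self-similarity $G_{\omega}(x)=\abs{\omega}^{-n/2}G_{e^{i\theta}}(\abs{\omega}^{-1/2}x)$ to extract the factor $\abs{\omega}^{-\frac{n}{2}(1-\frac{1}{r})-\frac{\abs{\alpha}}{2}}=\abs{\omega}^{-\frac{n}{2}(\frac{1}{q}-\frac{1}{p})-\frac{\abs{\alpha}}{2}}$ --- are exactly those deployed in Section \ref{sec:commutator_esti} for Theorem \ref{th:commutator_esti}. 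Your exponent bookkeeping is accurate (including the endpoint $r=+\infty$, where the change of variables is replaced by a supremum), so there is nothing to correct.
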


\begin{lem} \label{lem:application}
	Let $\omega \in \C$ satisfy $\re \omega >0$.
	Let $1 \leq q \leq p \leq + \infty$, $m \in \Z_{>0}$, $j \in \left\{ 1, \ldots, n \right\}$, and $\alpha \in \Z_{\geq 0}^{n}$ with $\abs{\alpha} =m$.
	Then, for any $\varphi \in L_{m}^{q} \left( \R^{n} \right)$, the identity
	\begin{align}
		\label{eq:B1}
		x_{j} R_{\alpha} \left( \omega \right) \varphi =R_{\alpha +e_{j}} \left( \omega \right) \varphi -R_{e_{j}} \left( \omega \right) \left( x^{\alpha} \varphi \right)
	\end{align}
	holds in $L^{q} \left( \R^{n} \right)$.
	Moreover, the estimate
	\begin{align}
		\label{eq:B2}
		&\sum_{j=1}^{n} \sum_{\abs{\alpha} =m} \norm{x_{j} R_{\alpha} \left( \omega \right) \varphi}_{p} \nonumber \\
		&\hspace{1cm} \leq \left( nA_{m+1, r} \left( \theta \right) + \frac{\left( n+m-1 \right) !}{\left( n-1 \right) ! m!} A_{1, r} \left( \theta \right) \right) \abs{\omega}^{- \frac{n}{2} \left( \frac{1}{q} - \frac{1}{p} \right)} \left( \abs{\omega}^{\frac{1}{2}} \norm{\abs{x}^{m} \varphi}_{q} + \abs{\omega}^{\frac{m+1}{2}} \norm{\varphi}_{q} \right)
	\end{align}
	holds for all $\varphi \in L_{m}^{q} \left( \R^{n} \right)$, where $\theta \in \left( - \pi /2, \pi /2 \right)$ with $e^{i \theta} = \omega / \abs{\omega}$ and $r \in \left[ 1, + \infty \right]$ with $1/p+1=1/r+1/q$.
\end{lem}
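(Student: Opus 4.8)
The plan is to read the identity \eqref{eq:B1} off from the convolution representation \eqref{eq:R_conv} and then to obtain \eqref{eq:B2} by inserting \eqref{eq:B1} into the triangle inequality and invoking Theorem \ref{th:commutator_esti} twice. For \eqref{eq:B1}, I would first record that, for $\varphi \in L^q_m(\R^n)$, each of the three terms in \eqref{eq:B1} lies in $L^q(\R^n)$ once it is expressed through \eqref{eq:R_conv}: every convolution occurring there pairs a kernel $x^\beta G_\omega \in L^1(\R^n)$ with a factor $x^\gamma \varphi \in L^q(\R^n)$ of order $\abs{\gamma} \leq m$ (the constraint $\beta \neq 0$ keeps $\abs{\gamma} \leq m$ even at order $m+1$), while $R_{e_j}(\omega)(x^\alpha \varphi) = -2\omega\,(\partial_j G_\omega) \ast (x^\alpha \varphi)$ is in $L^q(\R^n)$ by Young's inequality. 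It is important that $R_{\alpha+e_j}(\omega)\varphi$ be read here as the operator defined by the formula of Theorem \ref{th:commutator}, equivalently by \eqref{eq:R_conv}, since $x^{\alpha+e_j}\varphi$ need not belong to $L^q(\R^n)$ when $\varphi \in L^q_m(\R^n)$. The engine of the proof is the convolution product rule $x_j (f \ast g) = (x_j f) \ast g + f \ast (x_j g)$, applied termwise in \eqref{eq:R_conv}, which turns $x_j R_\alpha(\omega)\varphi$ into a sum of convolutions $(x^{\beta+e_j} G_\omega) \ast (x^\gamma \varphi)$ and $(x^\beta G_\omega) \ast (x^{\gamma+e_j}\varphi)$.

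It then remains to match multinomial coefficients. Writing each of these convolutions in the common form $(x^\delta G_\omega) \ast (x^\epsilon \varphi)$ with $\delta + \epsilon = \alpha + e_j$ — so $\delta = \beta + e_j$, $\epsilon = \gamma$ for the first family and $\delta = \beta$, $\epsilon = \gamma + e_j$ for the second — their coefficients are $\alpha!\,\delta_j/(\delta!\,\epsilon!)$ and $\alpha!\,\epsilon_j/(\delta!\,\epsilon!)$, respectively. Since $\delta_j + \epsilon_j = \alpha_j + 1$, these add up to $(\alpha + e_j)!/(\delta!\,\epsilon!)$, which is precisely the coefficient of $(x^\delta G_\omega) \ast (x^\epsilon \varphi)$ in $R_{\alpha+e_j}(\omega)\varphi$. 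The only discrepancy occurs at the index $\delta = e_j$, $\epsilon = \alpha$, where the first family is absent because it requires $\beta = \delta - e_j \neq 0$; the resulting deficit is exactly $(x^{e_j} G_\omega) \ast (x^\alpha \varphi) = R_{e_j}(\omega)(x^\alpha \varphi)$, which accounts for the subtracted term in \eqref{eq:B1}. A short check of the indices $\delta = 0$, $\delta = e_j$, and generic $\delta$ completes the verification of \eqref{eq:B1}.

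For \eqref{eq:B2}, I would apply the triangle inequality to \eqref{eq:B1} and estimate the two resulting sums separately. In the first, as $(\alpha, j)$ ranges over $\abs{\alpha} = m$ and $1 \leq j \leq n$, each $\alpha' = \alpha + e_j$ of order $m+1$ is produced once for every $j$ in its support, hence at most $n$ times, so that $\sum_{j=1}^n \sum_{\abs{\alpha}=m} \norm{R_{\alpha+e_j}(\omega)\varphi}_p \leq n \sum_{\abs{\alpha'}=m+1} \norm{R_{\alpha'}(\omega)\varphi}_p$; the estimate of Theorem \ref{th:commutator_esti} at order $m+1$, whose proof relies only on \eqref{eq:R_conv} and is thus valid for $\varphi \in L^q_m(\R^n)$ with weighted norm $\norm{\abs{x}^m \varphi}_q$, bounds this by the $n A_{m+1,r}(\theta)$ contribution. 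For the second sum, the $m=1$ case of Theorem \ref{th:commutator_esti}, in the sharp single-term form $\sum_{\abs{\delta}=1} \norm{R_\delta(\omega)\psi}_p \leq A_{1,r}(\theta)\,\abs{\omega}^{-\frac{n}{2}(1/q - 1/p) + 1/2}\norm{\psi}_q$ obtained in that proof, applied to $\psi = x^\alpha \varphi \in L^q(\R^n)$ and summed over $\abs{\alpha} = m$ using $\norm{x^\alpha \varphi}_q \leq \norm{\abs{x}^m \varphi}_q$ together with $\#\{\alpha : \abs{\alpha} = m\} = (n+m-1)!/((n-1)!\,m!)$, produces the second constant. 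Adding the two contributions yields \eqref{eq:B2}.

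The main obstacle is the coefficient bookkeeping behind \eqref{eq:B1}, combined with the conceptual subtlety that $R_{\alpha+e_j}(\omega)$ has to be interpreted through its defining formula rather than as a genuine commutator. This is precisely why I would run the whole argument at the level of the convolution representation \eqref{eq:R_conv}, which stays valid for every $q \in [1, +\infty]$, in particular for $q = +\infty$, where $\SS(\R^n)$ is not dense in $L^q_m(\R^n)$ and a density argument would be unavailable.
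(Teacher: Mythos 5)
Your proof is correct and essentially coincides with the paper's: the identity \eqref{eq:B1} is established there by exactly the same device (the product rule $x_j(f\ast g)=(x_jf)\ast g+f\ast(x_jg)$ applied termwise to \eqref{eq:R_conv}, followed by the reindexing $\beta'=\beta+e_j$, $\gamma'=\gamma$ and $\beta'=\beta$, $\gamma'=\gamma+e_j$ and the Pascal-type coefficient merge), and your derivation of \eqref{eq:B2} — triangle inequality on \eqref{eq:B1}, the order-$(m+1)$ bound valid for $\varphi\in L^q_m$ with multiplicity factor $n$, plus the Young/counting bound for the $R_{e_j}(\omega)(x^\alpha\varphi)$ terms — is precisely the argument the paper compresses into ``can be shown in the same way as in the proof of Theorem \ref{th:commutator_esti}'', reproducing the stated constant $nA_{m+1,r}(\theta)+\frac{(n+m-1)!}{(n-1)!\,m!}A_{1,r}(\theta)$.
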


\begin{rem}
	If $\varphi \in L^{q}_{m+1} \left( \R^{n} \right)$, then it follows from Theorem \ref{th:commutator} that
	\begin{align*}
		x_{j} R_{\alpha} \left( \omega \right) \varphi &=x_{j} \left[ x^{\alpha}, e^{\omega \Delta} \right] \varphi \\
		&= \left[ x_{j} x^{\alpha}, e^{\omega \Delta} \right] \varphi - \left[ x_{j}, e^{\omega \Delta} \right] \left( x^{\alpha} \varphi \right) \\
		&=R_{\alpha +e_{j}} \left( \omega \right) \varphi -R_{e_{j}} \left( \omega \right) \left( x^{\alpha} \varphi \right)
	\end{align*}
	for any $j \in \left\{ 1, \ldots, n \right\}$ and $\alpha \in \Z_{\geq 0}^{n}$ with $\abs{\alpha} =m$.
	However, if $\varphi \in L^{q}_{m} \left( \R^{n} \right) \setminus L^{q}_{m+1} \left( \R^{n} \right)$, then the terms $\left[ x_{j} x^{\alpha}, e^{\omega \Delta} \right] \varphi$ and $\left[ x_{j}, e^{\omega \Delta} \right] \left( x^{\alpha} \varphi \right)$ in the second identity are not well-defined in $L^{q} \left( \R^{n} \right)$.
	For this reason, to derive \eqref{eq:B1} for all $\varphi \in L^{q}_{m} \left( \R^{n} \right)$, we need to compute $x_{j} R_{\alpha} \left( \omega \right) \varphi$ directly by using the definition of $R_{\alpha} \left( \omega \right) \varphi$.
\end{rem}

\begin{proof}[Proof of Lemma \ref{lem:application}]
	Let $\varphi \in L_{m}^{q} \left( \R^{n} \right)$.
	By \eqref{eq:R_conv}, we have
	\begin{align*}
		&x_{j} R_{\alpha} \left( \omega \right) \varphi +R_{e_{j}} \left( \omega \right) \left( x^{\alpha} \varphi \right) \\
		&\hspace{1cm} = \sum_{\substack{\beta + \gamma = \alpha \\ \beta \neq 0}} \frac{\alpha !}{\beta ! \gamma !} {\,} \left( ( x_{j} x^{\beta} G_{\omega} ) \ast \left( x^{\gamma} \varphi \right) + ( x^{\beta} G_{\omega} ) \ast \left( x_{j} x^{\gamma} \varphi \right) \right) + \left( x_{j} G_{\omega} \right) \ast \left( x^{\alpha} \varphi \right) \\
		&\hspace{1cm} = \sum_{\substack{\beta' + \gamma' = \alpha +e_{j} \\ \gamma' \leq \alpha}} \frac{\alpha !}{\left( \beta' -e_{j} \right) ! \gamma' !} {\,} ( x^{\beta'} G_{\omega} ) \ast ( x^{\gamma'} \varphi ) + \sum_{\substack{\beta' + \gamma' = \alpha +e_{j} \\ 0 \neq \beta' \leq \alpha}} \frac{\alpha !}{\beta' ! \left( \gamma' -e_{j} \right) !} {\,} ( x^{\beta'} G_{\omega} ) \ast ( x^{\gamma'} \varphi ) \\
		&\hspace{1cm} = \sum_{\substack{\beta' + \gamma' = \alpha +e_{j} \\ \beta' \neq 0}} \frac{\left( \alpha +e_{j} \right) !}{\beta' ! \gamma' !} {\,} ( x^{\beta'} G_{\omega} ) \ast ( x^{\gamma'} \varphi ) \\
		&\hspace{1cm} =R_{\alpha + e_{j}} \left( \omega \right) \varphi.
	\end{align*}
	The estimate \eqref{eq:B2} can be shown in the same way as in the proof of Theorem \ref{th:commutator_esti}.
\end{proof}

\begin{lem} \label{lem:commutator_gene}
	Let $\omega \in \C$ satisfy $\re \omega >0$.
	Let $\eta \in W^{1, \infty} \left( \R^{n} \right)$ and let $1 \leq q \leq p \leq + \infty$.
	Then, the estimate
	\begin{align*}
		\norm{\left[ \eta, e^{\omega \Delta} \right] \varphi}_{p} \leq A_{1, r} \left( \theta \right) \abs{\omega}^{- \frac{n}{2} \left( \frac{1}{q} - \frac{1}{p} \right) + \frac{1}{2}} \norm{\nabla \eta}_{\infty} \norm{\varphi}_{q}
	\end{align*}
	holds for any $\varphi \in L^{q} \left( \R^{n} \right)$, where $\left[ \eta, e^{\omega \Delta} \right] \varphi \coloneqq \eta e^{\omega \Delta} \varphi -e^{\omega \Delta} \left( \eta \varphi \right)$, $\theta \in \left( - \pi /2, \pi /2 \right)$ with $e^{i \theta} = \omega / \abs{\omega}$, and $r \in \left[ 1, + \infty \right]$ with $1/p+1=1/r+1/q$.
\end{lem}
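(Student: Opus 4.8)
The plan is to realize the commutator as an integral operator whose kernel is controlled by the Lipschitz seminorm of $\eta$, and then to borrow verbatim the kernel estimate already carried out in the proof of Theorem \ref{th:commutator_esti}. First I would observe that $\eta \in W^{1, \infty} \left( \R^{n} \right) \subset L^{\infty} \left( \R^{n} \right)$, so that $\eta \varphi \in L^{q} \left( \R^{n} \right)$ and both $\eta e^{\omega \Delta} \varphi$ and $e^{\omega \Delta} \left( \eta \varphi \right)$ are well-defined in $L^{p} \left( \R^{n} \right)$; moreover, the two defining integrals converge absolutely for almost every $x$ (by H\"{o}lder's inequality, since $\re \omega >0$ makes $G_{\omega} \left( x- \cdot \right)$ lie in the appropriate Lebesgue space), which gives the integral representation
\begin{align*}
	\left[ \eta, e^{\omega \Delta} \right] \varphi \left( x \right) = \int_{\R^{n}} G_{\omega} \left( x-y \right) \left( \eta \left( x \right) - \eta \left( y \right) \right) \varphi \left( y \right) dy.
\end{align*}
Replacing $\eta$ by its Lipschitz representative, whose Lipschitz constant equals $\norm{\nabla \eta}_{\infty}$, I would bound $\abs{\eta \left( x \right) - \eta \left( y \right)} \leq \norm{\nabla \eta}_{\infty} \abs{x-y}$ and thereby dominate the commutator pointwise by
\begin{align*}
	\abs{\left[ \eta, e^{\omega \Delta} \right] \varphi \left( x \right)} \leq \norm{\nabla \eta}_{\infty} \int_{\R^{n}} \abs{x-y} {\,} \abs{G_{\omega} \left( x-y \right)} {\,} \abs{\varphi \left( y \right)} {\,} dy.
\end{align*}

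Next I would apply Young's inequality with the exponent relation $1/p+1=1/r+1/q$ to the kernel $z \mapsto \abs{z} \abs{G_{\omega} \left( z \right)}$, obtaining
\begin{align*}
	\norm{\left[ \eta, e^{\omega \Delta} \right] \varphi}_{p} \leq \norm{\nabla \eta}_{\infty} \lVert \abs{x} G_{\omega} \rVert_{r} \norm{\varphi}_{q},
\end{align*}
where $\lVert \abs{x} G_{\omega} \rVert_{r}$ denotes the $L^{r}$-norm of $x \mapsto \abs{x} G_{\omega} \left( x \right)$. The factor $\lVert \abs{x} G_{\omega} \rVert_{r}$ is then handled exactly as in Section \ref{sec:commutator_esti}: the self-similarity of $G_{\omega}$ yields $\lVert \abs{x} G_{\omega} \rVert_{r} = \abs{\omega}^{- \frac{n}{2} \left( 1- \frac{1}{r} \right) + \frac{1}{2}} \lVert \abs{x} G_{e^{i \theta}} \rVert_{r}$, and since $1/q-1/p = 1-1/r$ the resulting power of $\abs{\omega}$ is precisely $- \frac{n}{2} \left( 1/q - 1/p \right) + 1/2$, matching the claimed exponent.

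Finally, I would estimate the remaining $\theta$-dependent constant by the same device used for the monomial kernels in the proof of Theorem \ref{th:commutator_esti}, namely splitting off one Gaussian factor and taking a supremum:
\begin{align*}
	\lVert \abs{x} G_{e^{i \theta}} \rVert_{r} \leq 2^{\frac{n}{2}} \norm{G_{2 e^{i \theta}}}_{r} \sup_{x \in \R^{n}} \left[ \abs{x} \exp \left( - \frac{\cos \theta}{8} \abs{x}^{2} \right) \right] = \frac{1}{n} A_{1, r} \left( \theta \right) \leq A_{1, r} \left( \theta \right),
\end{align*}
where the middle expression is evaluated using $\norm{G_{2 e^{i \theta}}}_{r} = \left( 8 \pi \right)^{- \frac{n}{2} \left( 1- \frac{1}{r} \right)} \left( 1/ \left( r \cos \theta \right) \right)^{\frac{n}{2r}}$ together with the maximization $\sup_{x} \abs{x} \exp ( - \frac{\cos \theta}{8} \abs{x}^{2} ) = ( 4/ \left( e \cos \theta \right) )^{1/2}$. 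Chaining the three displays gives the assertion. The only point that is genuinely new relative to Theorem \ref{th:commutator_esti} is the passage from the algebraic binomial identity for $\left( x-y \right)^{\beta}$ to the analytic Lipschitz bound $\abs{\eta \left( x \right) - \eta \left( y \right)} \leq \norm{\nabla \eta}_{\infty} \abs{x-y}$; once this pointwise domination is in place the computation specializes to the single kernel $\abs{x} G_{\omega}$, so I expect the main (and essentially only) obstacle to be the correct invocation of the Lipschitz representative of a $W^{1, \infty} \left( \R^{n} \right)$ function, after which the estimate is routine.
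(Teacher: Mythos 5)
Your proposal is correct and follows essentially the same route as the paper: the integral representation of the commutator, the pointwise bound $\abs{\eta(x)-\eta(y)}\leq\norm{\nabla\eta}_{\infty}\abs{x-y}$, Young's inequality with $1/p+1=1/r+1/q$, the self-similar rescaling of $G_{\omega}$, and the bound $\norm{\abs{x}G_{e^{i\theta}}}_{r}\leq A_{1,r}(\theta)$ (your explicit evaluation as $\tfrac{1}{n}A_{1,r}(\theta)$ is a correct sharpening of what the paper uses). The only cosmetic difference is that the paper justifies the Lipschitz bound by writing $\eta(x)-\eta(y)$ as $\int_{0}^{1}(\nabla\eta)(y+\theta(x-y))\cdot(x-y)\,d\theta$, whereas you invoke the Lipschitz representative of a $W^{1,\infty}$ function; these are interchangeable.
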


\begin{proof}
	For any $x \in \R^{n}$, we have
	\begin{align*}
		\left( \left[ \eta, e^{\omega \Delta} \right] \varphi \right) \left( x \right)
		&= \int_{\R^{n}} \left( \eta \left( x \right) - \eta \left( y \right) \right) G_{\omega} \left( x-y \right) \varphi \left( y \right) dy \\
		&= \int_{\R^{n}} \left( \int_{0}^{1} \frac{d}{d \theta} \eta \left( y+ \theta \left( x-y \right) \right) d \theta \right) G_{\omega} \left( x-y \right) \varphi \left( y \right) dy \\
		&= \int_{\R^{n}} \int_{0}^{1} \left( \nabla \eta \right) \left( y+ \theta \left( x-y \right) \right) \cdot \left( x-y \right) G_{\omega} \left( x-y \right) \varphi \left( y \right) d \theta dy.
	\end{align*}
	By Young's inequality, we obtain
	\begin{align*}
		\norm{\left[ \eta, e^{\omega \Delta} \right] \varphi}_{p}
		&\leq \norm{\nabla \eta}_{\infty} \norm{\abs{\abs{x} G_{\omega}} \ast \abs{\varphi}}_{p} \\
		&\leq \norm{\nabla \eta}_{\infty} \norm{\abs{x} G_{\omega}}_{r} \norm{\varphi}_{q} \\
		&= \abs{\omega}^{- \frac{n}{2} \left( \frac{1}{q} - \frac{1}{p} \right) + \frac{1}{2}} \norm{\nabla \eta}_{\infty} \norm{\abs{x} G_{e^{i \theta}}}_{r} \norm{\varphi}_{q} \\
		&\leq A_{1, r} \left( \theta \right) \abs{\omega}^{- \frac{n}{2} \left( \frac{1}{q} - \frac{1}{p} \right) + \frac{1}{2}} \norm{\nabla \eta}_{\infty} \norm{\varphi}_{q}.
	\end{align*}
	Here, we have used the fact that $\norm{\abs{x} G_{e^{i \theta}}}_{r} \leq A_{1, r} \left( \theta \right)$.
\end{proof}

We are ready to show Proposition \ref{pro:CGL_weight}.

\begin{proof}[Proof of Proposition \ref{pro:CGL_weight}]
	We prove Proposition \ref{pro:CGL_weight} by induction on $m \in \Z_{>0}$.
	For $j \in \left\{ 1, \ldots, n \right\}$ and $\varepsilon >0$, we define a function $\eta_{j, \varepsilon} \colon \R^{n} \to \R$ by
	\begin{align*}
		\eta_{j, \varepsilon} \left( x \right) \coloneqq x_{j} e^{- \varepsilon \abs{x}^{2}}, \qquad x= \left( x_{1}, \ldots, x_{n} \right) \in \R^{n}.
	\end{align*}
	Then, we see that $\eta_{j, \varepsilon} \in W^{1, \infty} \left( \R^{n} \right)$ and
	\begin{align}
		\label{eq:weight_app}
		\norm{\nabla \eta_{j, \varepsilon}}_{\infty} \leq 2 \sup_{\rho \geq 0} \rho e^{- \rho} +1 \leq 2.
	\end{align}

	We assume that Proposition \ref{pro:CGL_weight} holds for some $m \in \Z_{>0}$ and that $u_{0} \in L_{m+1}^{q} \left( \R^{n} \right)$.
	Let $t>0$ and let $\alpha' \in \Z_{\geq 0}^{n}$ satisfy $\abs{\alpha'} =m+1$.
	Then, there exist $\alpha \in \Z_{\geq 0}^{n}$ with $\abs{\alpha} =m$ and $j \in \left\{ 1, \ldots, n \right\}$ such that $\alpha' = \alpha +e_{j}$.
	Multiplying \eqref{CGL_integral} by $\eta_{j, \varepsilon} x^{\alpha}$ and applying Theorem \ref{th:commutator}, we obtain
	\begin{align*}
		\eta_{j, \varepsilon} x^{\alpha} u \left( t \right)
		&= \eta_{j, \varepsilon} x^{\alpha} e^{t \nu \Delta} u_{0} + \int_{0}^{t} \eta_{j, \varepsilon} x^{\alpha} e^{\left( t-s \right) \nu \Delta} f \left( u \left( s \right) \right) ds \\
		&=e^{t \nu \Delta} \left( \eta_{j, \varepsilon} x^{\alpha} u_{0} \right) + \left[ \eta_{j, \varepsilon}, e^{t \nu \Delta} \right] \left( x^{\alpha} u_{0} \right) + \eta_{j, \varepsilon} R_{\alpha} \left( t \nu \right) u_{0} \\
		&\hspace{1cm} + \int_{0}^{t} e^{\left( t-s \right) \nu \Delta} \left( \eta_{j, \varepsilon} x^{\alpha} f \left( u \left( s \right) \right) \right) ds \\
		&\hspace{1cm} + \int_{0}^{t} \left[ \eta_{j, \varepsilon}, e^{\left( t-s \right) \nu \Delta} \right] \left( x ^{\alpha} f \left( u \left( s \right) \right) \right) ds \\
		&\hspace{1cm} + \int_{0}^{t} \eta_{j, \varepsilon} R_{\alpha} \left( \left( t-s \right) \nu \right) f \left( u \left( s \right) \right) ds,
	\end{align*}
	where $f \left( \xi \right) \coloneqq \lambda \abs{\xi}^{p-1} \xi$.
	By \eqref{CGL_decay}, \eqref{eq:weight_app}, Lemmas \ref{lem:LpLq}, \ref{lem:application} and \ref{lem:commutator_gene}, and the induction hypothesis, we have
	\begin{align*}
		\norm{\eta_{j, \varepsilon} x^{\alpha} u \left( t \right)}_{q}
		&\leq \norm{e^{t \nu \Delta} \left( \eta_{j, \varepsilon} x^{\alpha} u_{0} \right)}_{q} + \norm{\left[ \eta_{j, \varepsilon}, e^{t \nu \Delta} \right] \left( x^{\alpha} u_{0} \right)}_{q} + \norm{x_{j} R_{\alpha} \left( t \nu \right) u_{0}}_{q} \\
		&\hspace{1cm} + \int_{0}^{t} \norm{e^{\left( t-s \right) \nu \Delta} \left( \eta_{j, \varepsilon} x^{\alpha} f \left( u \left( s \right) \right) \right)}_{q} ds \\
		&\hspace{1cm} + \int_{0}^{t} \norm{\left[ \eta_{j, \varepsilon}, e^{\left( t-s \right) \nu \Delta} \right] \left( x ^{\alpha} f \left( u \left( s \right) \right) \right)}_{q} ds \\
		&\hspace{1cm} + \int_{0}^{t} \norm{x_{j} R_{\alpha} \left( \left( t-s \right) \nu \right) f \left( u \left( s \right) \right)}_{q} ds \\
		&\leq C \norm{\eta_{j, \varepsilon} x^{\alpha} u_{0}}_{q} +Ct^{\frac{1}{2}} \norm{\nabla \eta_{j, \varepsilon}}_{\infty} \norm{x^{\alpha} u_{0}}_{q} +C \left( t^{\frac{1}{2}} \norm{\abs{x}^{m} u_{0}}_{q} +t^{\frac{m+1}{2}} \norm{u_{0}}_{q} \right) \\
		&\hspace{1cm} +C \int_{0}^{t} \norm{u \left( s \right)}_{\infty}^{p-1} \norm{\eta_{j, \varepsilon} x^{\alpha} u \left( s \right)}_{q} ds \\
		&\hspace{1cm} +C \int_{0}^{t} \left( t-s \right)^{\frac{1}{2}} \norm{\nabla \eta_{j, \varepsilon}}_{\infty} \norm{u \left( s \right)}_{\infty}^{p-1} \norm{x ^{\alpha} u \left( s \right)}_{q} ds \\
		&\hspace{1cm} +C \int_{0}^{t} \left( \left( t-s \right)^{\frac{1}{2}} \norm{\abs{x}^{m} u \left( s \right)}_{q} + \left( t-s \right)^{\frac{m+1}{2}} \norm{u \left( s \right)}_{q} \right) \norm{u \left( s \right)}_{\infty}^{p-1} ds \\
		&\leq C \norm{x_{j} x^{\alpha} u_{0}}_{q} +Ct^{\frac{1}{2}} \norm{x^{\alpha} u_{0}}_{q} +C \left( t^{\frac{1}{2}} \norm{\abs{x}^{m} u_{0}}_{q} +t^{\frac{m+1}{2}} \norm{u_{0}}_{q} \right) \\
		&\hspace{1cm} +C \int_{0}^{t} \left( 1+s \right)^{- \frac{n}{2} \left( p-1 \right)} \norm{\eta_{j, \varepsilon} x^{\alpha} u \left( s \right)}_{q} ds \\
		&\hspace{1cm} +Ct^{\frac{1}{2}} \int_{0}^{t} \left( 1+s \right)^{- \frac{n}{2} \left( p-1 \right)} \left( 1+s^{\frac{m}{2}} \right) ds \\
		&\hspace{1cm} +C \int_{0}^{t} \left( t^{\frac{1}{2}} \left( 1+s^{\frac{m}{2}} \right) +t^{\frac{m+1}{2}} \left( 1+s \right)^{- \frac{n}{2} \left( 1- \frac{1}{q} \right)} \right) \left( 1+s \right)^{- \frac{n}{2} \left( p-1 \right)} ds \\
		&\leq C \left( 1+t^{\frac{m+1}{2}} \right) +C \int_{0}^{t} \left( 1+s \right)^{- \frac{n}{2} \left( p-1 \right)} \norm{\eta_{j, \varepsilon} x^{\alpha} u \left( s \right)}_{q} ds.
	\end{align*}
	Therefore, it follows from the Gr\"{o}nwall lemma that
	\begin{align*}
		\norm{\eta_{j, \varepsilon} x^{\alpha} u \left( t \right)}_{q}
		&\leq C \left( 1+t^{\frac{m+1}{2}} \right) \exp \left( C \int_{0}^{t} \left( 1+s \right)^{- \frac{n}{2} \left( p-1 \right)} ds \right) \\
		&\leq C \left( 1+t^{\frac{m+1}{2}} \right).
	\end{align*}
	By taking $\varepsilon \searrow 0$ and applying Fatou's lemma, we deduce that $x^{\alpha'} u \left( t \right) =x_{j} x^{\alpha} u \left( t \right) \in L^{q} \left( \R^{n} \right)$ and
	\begin{align*}
		\lVert x^{\alpha'} u \left( t \right) \rVert_{q} &\leq C \left( 1+t^{\frac{m+1}{2}} \right).
	\end{align*}
	This in turn implies $x^{\alpha'} f \left( u \left( t \right) \right) \in L^{q} \left( \R^{n} \right)$.
	Moreover, by \eqref{CGL_integral} and Theorem \ref{th:commutator}, we get
	\begin{align*}
		x^{\alpha'} u \left( t \right)
		&=x^{\alpha'} e^{t \nu \Delta} u_{0} + \int_{0}^{t} x^{\alpha'} e^{\left( t-s \right) \nu \Delta} f \left( u \left( s \right) \right) ds \\
		&=e^{t \nu \Delta} {\,} ( x^{\alpha'} u_{0} ) +R_{\alpha'} \left( t \nu \right) u_{0} + \int_{0}^{t} e^{\left( t-s \right) \nu \Delta} {\,} ( x^{\alpha'} f \left( u \left( s \right) \right) ) {\,} ds+ \int_{0}^{t} R_{\alpha'} \left( \left( t-s \right) \nu \right) f \left( u \left( s \right) \right) ds,
	\end{align*}
	whence follows $x^{\alpha'} u \in C \left( \left( 0, + \infty \right); L^{q} \left( \R^{n} \right) \right)$.
	In the same way, we can prove Proposition \ref{pro:CGL_weight} with $m=1$.
\end{proof}

\section*{Acknowledgments}

The first author was supported by JSPS Invitational Fellowships for Research in Japan \# S24040 and he would like to thank Hatem Zaag for continuous encouragement over the years.
The second author was supported by JSPS Grant-in-Aid for JSPS Fellows \# JP24KJ2084.
The third author was supported by JSPS Grant-in-Aid for Scientific Research (S) \# JP24H00024.


%



\begin{thebibliography}{99}
	\bibitem{Aranson-Kramer} 
		I. S. Aranson and L. Kramer,
		\textit{The world of the complex Ginzburg--Landau equation}, Rev. Modern Phys., \textbf{74} (2002), no. 1, 99--143.
	\bibitem{Blunck-Kunstmann} 
		S. Blunck and P. C. Kunstmann,
		\textit{Calder\'{o}n--Zygmund theory for non-integral operators and the $H^{\infty}$ functional calculus},
		Rev. Mat. Iberoamericana, \textbf{19} (2003), no. 3, 919--942.
	\bibitem{Cazenave} 
		T. Cazenave,
		``Semilinear Schr\"{o}dinger Equations'',
		Courant Lecture Notes in Mathematics, 10, New York Univ., Courant Inst. Math. Sci., New York, 2003;
		Amer. Math. Soc., Providence, RI, 2003.
	\bibitem{Davies} 
		E. B. Davies,
		\textit{Uniformly elliptic operators with measurable coefficients},
		J. Funct. Anal., \textbf{132} (1995), no. 1, 141--169.
	\bibitem{Duong-Nouaili-Zaag} 
		G. K. Duong, N. Nouaili, and H. Zaag,
		\textit{Construction of blowup solutions for the complex Ginzburg--Landau equation with critical parameters},
		Mem. Amer. Math. Soc., \textbf{285} (2023), no. 1411.
	\bibitem{Giga-Giga-Saal} 
		M.-H. Giga, Y. Giga, and J. Saal,
		``Nonlinear Partial Differential Equations'',
		Progress in Nonlinear Differential Equations and their Applications, 79,
		Birkh\"{a}user Boston, Boston, MA, 2010.
	\bibitem{Ishige-Kawakami2013} 
		K. Ishige and T. Kawakami,
		\textit{Asymptotic expansions of solutions of the Cauchy problem for nonlinear parabolic equations},
		J. Anal. Math., \textbf{121} (2013), 317--351.
	\bibitem{Ishige-Kawakami2024} 
		K. Ishige and T. Kawakami,
		\textit{Refined asymptotic expansions of solutions to fractional diffusion equations},
		J. Dynam. Differential Equations, \textbf{36} (2024), no. 3, 2679--2702.
	\bibitem{Ishige-Kawakami-Kobayashi} 
		K. Ishige, T. Kawakami, and K. Kobayashi,
		\textit{Asymptotics for a nonlinear integral equation with a generalized heat kernel},
		J. Evol. Equ., \textbf{14} (2014), no. 4-5, 749--777.
	\bibitem{Ivanisvili-Nazarov} 
		P. Ivanisvili and F. L. Nazarov,
		\textit{On Weissler's conjecture on the Hamming cube I},
		Int. Math. Res. Not. IMRN, \textbf{2022}, no. 9, 6991--7020.
	\bibitem{Jensen} 
		A. Jensen,
		\textit{Commutator methods and a smoothing property of the Schr\"{o}dinger evolution group},
		Math. Z., \textbf{191} (1986), no. 1, 53--59.
	\bibitem{Kusaba-Ozawa} 
		R. Kusaba and T. Ozawa,
		\textit{Asymptotic behavior of global solutions to the complex Ginzburg--Landau type equation in the super Fujita-critical case},
		Evol. Equ. Control Theory, \textbf{14} (2025), no. 2, 210--245.
	\bibitem{Ozawa1990} 
		T. Ozawa,
		\textit{Smoothing effects and dispersion of singularities for the Schr\"{o}dinger evolution group},
		Arch. Rational Mech. Anal., \textbf{110} (1990), no. 2, 165--186.
	\bibitem{Ozawa1991} 
		T. Ozawa,
		\textit{Invariant subspaces for the Schr\"{o}dinger evolution group},
		Ann. Inst. H. Poincar\'{e} Phys. Th\'{e}or., \textbf{54} (1991), no. 1, 43--57.
	\bibitem{Quittner-Souplet} 
		P. Quittner and P. Souplet,
		``Superlinear Parabolic Problems'',
		Second edition,
		Birkh\"{a}user Advanced Texts: Basler Lehrb\"{u}cher, Birkh\"{a}user/Springer, Cham, 2019.
	\bibitem{Sulem-Sulem} 
		C. Sulem and P.-L. Sulem,
		``The Nonlinear Schr\"odinger Equation'',
		Applied Mathematical Sciences, 139,
		Springer, New York, 1999.
	\bibitem{Thangavelu} 
		S. Thangavelu,
		``Harmonic Analysis on the Heisenberg Group'',
		Progress in Mathematics, 159,
		Birkh\"{a}user Boston, Boston, MA, 1998.
	\bibitem{Weissler} 
		F. B. Weissler,
		\textit{Two-point inequalities, the Hermite semigroup, and the Gauss--Weierstrass semigroup},
		J. Functional Analysis, \textbf{32} (1979), no. 1, 102--121.
\end{thebibliography}
\end{document}